\newtheorem{theorem}{Theorem}[section]
\newtheorem{lemma}[theorem]{Lemma}
\theoremstyle{definition}
\newtheorem{definition}[theorem]{Definition}
\newtheorem{observation}[theorem]{Observation}
\newtheorem{problem}[theorem]{Problem}
\newtheorem{remark}[theorem]{Remark}
\newtheorem{claim}[theorem]{Claim}
\title{Reconstructing Polytopes and Pseudomanifolds}
\author{Joshua Hinman\\
\small Department of Mathematics\\
\small University of Washington\\
\small Seattle, WA 98195-4350, USA\\
\small \texttt{joshrh@uw.edu}}
\date{}
\begin{document}
\maketitle

\begin{abstract}
    We prove that every 4-polytope is determined by its edge-polygon incidences, solving an open problem of Gr\"unbaum. For each $d \geq 3$, we show that \emph{not} every $d$-polytope is determined by its $(d-3)$-skeleton and dual $(d-3)$-skeleton together, answering a question of Samper.

    In the simplicial realm, we prove that for $d \geq 4$ and $\lceil \frac{d}{2} \rceil \leq k \leq d-2$, every homology $(d-1)$-manifold is determined by the incidences of its $k$- and $(k-1)$-faces. For $d \geq 5$ and $\lceil \frac{d+1}{2} \rceil \leq k \leq d-2$, we extend our proof to normal $(d-1)$-pseudomanifolds whose $(2d-2k-1)$-dimensional links are homology manifolds. Finally, we prove that \emph{not} every normal $(d-1)$-pseudomanifold is determined by its $(d-2)$-skeleton.
\end{abstract} 

\section{Introduction}
This paper is dedicated to reconstruction problems on polytopes and simplicial complexes. Reconstruction problems formalize the following question: if $P$ is an unknown $d$-polytope, how much of the face poset of $P$ must we know to reconstruct the rest?

The \emph{face poset} $\mathscr{F}(P)$ of a $d$-polytope $P$ is the set of faces of $P$, partially ordered by containment. Many reconstruction problems begin with the restriction of $\mathscr{F}(P)$ to faces of certain dimensions. One such problem is to reconstruct $P$ from its \emph{$k$-skeleton}, defined for $0 \leq k < d$ as the restriction of $\mathscr{F}(P)$ to faces of dimensions $0, \ldots, k$.

These problems are long-standing. In 1932, Whitney proved that any 3-polytope can be recovered from its vertex-edge graph. Specifically, Whitney showed that if $G$ is the vertex-edge graph of a 3-polytope, then the boundaries of facets are exactly the induced, non-separating cycles in $G$. \cite{whitney32}

Extending Whitney's result, Gr\"unbaum proved in 1967 that every $d$-polytope is determined by its $(d-2)$-skeleton. Gr\"unbaum's method paralleled Whitney's: if $\mathscr{C}$ is the $(d-2)$-skeleton of a $d$-polytope, Gr\"unbaum showed that the boundaries of facets are exactly the induced, non-separating subcomplexes of $\mathscr{C}$ homeomorphic to $S^{d-2}$. Gr\"unbaum noted that \emph{not} every $d$-polytope is determined by its $(d-3)$-skeleton, giving the counterexample of two distinct 4-polytopes with vertex-edge graph $K_8$. {\cite[pp.~228-229]{grunbaum03}}

Two other results of Gr\"unbaum are as follows. First, if $P$ is a $d$-polytope and $0 \leq a < b < d$, then the incidences between $a$- and $b$-faces of $P$ uniquely determine the restriction of $\mathscr{F}(P)$ to dimensions $a, \ldots, b$ {\cite[pp.~233]{grunbaum03}}. Effectively, given any two ``levels" of a face poset, we can recover everything in between.

Second, if $d \geq 5$, the restriction of $\mathscr{F}(P)$ to dimensions $1, \ldots, d-2$ is enough to reconstruct $P$ {\cite[pp.~234]{grunbaum03}}. Gr\"unbaum states that the $d=4$ analogue is an open problem:

\begin{problem}[Gr\"unbaum]
    \label{edge-ridge-problem}
    Are all 4-polytopes determined by their edge-ridge incidences?
\end{problem}

Problem \ref{edge-ridge-problem} is the final piece in a large puzzle. If $P$ is an arbitrary $d$-polytope, and we are given the restriction of $\mathscr{F}(P)$ to certain dimensions, then which dimensions must be included to guarantee we can reconstruct $P$? The answer is trivial for $d=2$: either the vertex set or the edge set is enough. Whitney's theorem covers $d=3$: we can reconstruct $P$ from its vertex-edge graph or, dually, its facet-ridge graph. For $d \geq 5$, Gr\"unbaum's results imply the following:
\begin{itemize}
    \item We can reconstruct $P$ if our subposet includes vertices or edges \emph{and} includes facets or ridges.
    \item We cannot necessarily reconstruct $P$ otherwise, as neither the $(d-3)$-skeleton nor the dual $(d-3)$-skeleton is sufficient in general.
\end{itemize}
Only for $d=4$ has the answer remained unknown.

A more general question arises naturally: if we know \emph{several} subposets of $\mathscr{F}(P)$, each being the restriction of $\mathscr{F}(P)$ to a distinct set of dimensions, when are these enough to reconstruct $P$? Samper recently asked a question of this type \cite{samper25}:

\begin{problem}[Samper]
    \label{samper-problem}
    Is every 4-polytope determined by the combination of its vertex-edge graph and its facet-ridge graph?
\end{problem}

Another genre of reconstruction problem deals with simplicial polytopes. If we know that $P$ is a simplicial $d$-polytope, then we can reconstruct $P$ from relatively little information, as the possible face posets of simplicial polytopes are far more restricted than those of general polytopes.

One celebrated theorem is attributed to Perles from 1970: any simplicial $d$-polytope is determined by its $\lfloor \frac{d}{2} \rfloor$-skeleton \cite{perles70}. In 1984, Dancis extended this theorem to homology $(d-1)$-manifolds for even $d$, and to homology $(d-1)$-manifolds with trivial $\frac{d-1}{2}$-homology for odd $d$. Dancis further proved that all homology $(d-1)$-manifolds are determined by their $\lceil \frac{d}{2} \rceil$-skeleta \cite{dancis84}.

In 1987, Blind and Mani proved that any simplicial polytope is determined by its facet-ridge graph \cite{blind87}. Kalai published a simpler proof the following year. Kalai worked in the dual setting, giving an algorithm to reconstruct a simple polytope from its vertex-edge graph \cite{kalai88}. By an additional result of Kalai, for any simplicial $d$-polytope $P$ and $1 \leq k < d$, the $k$-skeleton of $P$ is determined by the incidences of its $k$- and $(k-1)$-faces. Putting this all together, for any $\lfloor \frac{d}{2} \rfloor \leq k < d$, we can reconstruct $P$ from its $k$-, $(k-1)$-face incidences (see {\cite[Theorem 19.5.30]{kalai97}}).

This paper builds on the results above, both for polytopes and for simplicial complexes. We answer Problem \ref{edge-ridge-problem} in the affirmative, proving that all 4-polytopes are determined by their edge-ridge incidences (Theorem \ref{edge_ridge}). Our method involves Alexander duality, which we use to prove a ``non-separating" condition on the boundaries of facets akin to the conditions of Whitney and Gr\"unbaum. We then answer Problem \ref{samper-problem} in the negative by constructing distinct 4-polytopes with the same vertex-edge graph and the same facet-ridge graph. More generally, for each $d \geq 3$, we construct distinct $d$-polytopes with identical $(d-3)$-skeleta and identical dual $(d-3)$-skeleta (Theorem \ref{identical_skeleta}).

Next, we partially generalize Kalai and Perles's results on $k$-, $(k-1)$-face incidences from simplicial polytopes to pseudomanifolds. We prove that any simplicial homology $(d-1)$-manifold with $d \geq 4$ is determined by its $k$-, $(k-1)$-face incidences for arbitrary $\lceil \frac{d}{2} \rceil \leq k \leq d-2$ (Theorem \ref{half_dimension}). We further extend this to normal pseudomanifolds in which each $(2d-2k-1)$-dimensional link is a homology manifold (Theorem \ref{reconstruct_pseudomanifold}). Finally, we show that the reconstruction results for simplicial polytopes and manifolds fail for normal pseudomanifolds in general, as for each $d \geq 3$ we construct distinct, normal $(d-1)$-pseudomanifolds with identical $(d-2)$-skeleta (Theorem \ref{design-theorem}).

The structure of our paper is as follows. In Section \ref{s:preliminaries}, we define our objects of study and recall existing reconstruction results. In Section \ref{s:4-polytopes}, we discuss 4-polytopes and solve Problem \ref{edge-ridge-problem}. Section \ref{s:wedges} is home to our counterexamples for Problem \ref{samper-problem} and its higher-dimensional analogues. In Section \ref{s:main-result}, we combine our preceding results with those of Gr\"unbaum into a single reconstruction theorem, characterizing the incidence information needed to identify arbitrary $d$-polytopes for all $d \geq 4$. We conclude in Section \ref{s:simplicial} with our results for reconstructing simplicial complexes.

\section{Preliminaries}
\label{s:preliminaries}
In this section, we introduce the objects and tools involved in reconstruction problems. The reader may consult Gr\"unbaum \cite{grunbaum03} or Ziegler \cite{ziegler95} for any undefined terminology. The reader may also refer to Bayer \cite{bayer18} for a broader survey of existing reconstruction results.

\subsection{Polytopes and skeleta}
\begin{definition}
    For $d \geq 0$, a \emph{$d$-polytope} is the convex hull of a finite point set in $\mathds{R}^d$ whose affine hull is $\mathds{R}^d$.
\end{definition}

\begin{definition}
    Let $P$ be a $d$-polytope. A \emph{face} of $P$ is either $P$ itself or some $P \cap H$, where $H \subset \mathds{R}^d$ is a codimension one hyperplane that does not intersect the interior of $P$. For $0 \leq k < d$, a \emph{$k$-face} of $P$ is a face with a $k$-dimensional affine hull; a 0-, 1-, $(d-2)$-, or $(d-1)$-face of $P$ is called a \emph{vertex}, \emph{edge}, \emph{ridge}, or \emph{facet}, respectively.
\end{definition}

\begin{definition}
    Let $P$ be a $d$-polytope. The \emph{face poset} $\mathscr{F}(P)$ is the set of faces of $P$, partially ordered by containment. We denote by $\mathscr{F}^k(P)$ the set of $k$-faces of $P$.
\end{definition}

We may now state our reconstruction problem more precisely. Let $P$ be an unknown $d$-polytope. Suppose we know, up to isomorphism, some induced subposets of $\mathscr{F}(P)$; each subposet has the form $\bigcup_{k \in K} \mathscr{F}^k(P)$ for some $K \subseteq \{0,\ldots,d-1\}$. From which subposets of this kind is it guaranteed that we can reconstruct $\mathscr{F}(P)$?

This is a broad question, but we can simplify it using the notion of an \emph{$(a,b)$-skeleton} and a theorem of Gr\"unbaum.

\begin{definition}
    Let $P$ be a $d$-polytope and $0 \leq a \leq b < d$. We define the \emph{$(a,b)$-skeleton} of $P$ as the set
    \[
        \mathscr{F}_a^b(P) := \bigcup_{k=a}^b \mathscr{F}^k(P),
    \]
    with partial order induced by $\mathscr{F}(P)$. In particular, we call $\mathscr{F}_0^k(P)$ the \emph{$k$-skeleton} of $P$, and we call $\mathscr{F}_{d-k-1}^{d-1}(P)$ the \emph{dual $k$-skeleton} of $P$.
\end{definition}

\begin{theorem}[Gr\"unbaum {\cite[pp.~233]{grunbaum03}}]
    \label{interval}
    Let $P,Q$ be $d$-polytopes and $0 \leq a < b < d$. Then any poset isomorphism $\varphi:\mathscr{F}^a(P) \cup \mathscr{F}^b(P) \to \mathscr{F}^a(Q) \cup \mathscr{F}^b(Q)$ extends to an isomorphism $\mathscr{F}_a^b(P) \to \mathscr{F}_a^b(Q)$.
\end{theorem}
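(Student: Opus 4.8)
The plan is to reduce to the case $b = a+1$, then build the intermediate faces by a covering/intersection argument. First I would observe that it suffices to reconstruct, for each consecutive pair of levels $a \le c < c+1 \le b$, the incidences between $\mathscr{F}^{c}(P)$ and $\mathscr{F}^{c+1}(P)$ from the data we already have; once every consecutive incidence relation is known, the whole poset $\mathscr{F}_a^b(P)$ is determined, since the order relation between a $c$-face and a $c'$-face with $c < c'$ is the transitive closure of the covering relations. So the crux is: given the incidences between $a$-faces and $b$-faces, recover the faces of each intermediate dimension together with their incidences to the $a$-faces (and then to each other).

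The key idea is that a face of $P$ is determined by the set of vertices it contains, and more generally by the set of $a$-faces it contains; and conversely, a set $S$ of $a$-faces is the set of $a$-faces contained in some $c$-face exactly when $S$ is ``closed'' in the appropriate sense. Concretely, for $a < c$, I would define the candidate $c$-faces as follows: a subset $S \subseteq \mathscr{F}^a(P)$ is a candidate if it is the intersection of a family of $b$-faces (viewed as their sets of contained $a$-faces) and is ``$(a,c)$-spanning'' of the right combinatorial type. The honest way to make this precise is downward: a $(d-1)$-face (facet) corresponds to an inclusion-maximal proper subset of $\mathscr{F}^a(P)$ that arises as the $a$-faces below some element in the given data (when $b = d-1$ this is immediate; when $b < d-1$ one first has to recover the facets, which is where the argument is most delicate). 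Having the facets, each lower-dimensional face is an intersection of facets, and its dimension is detected by how many facets one must intersect, or equivalently by the rank of the corresponding interval — but rank is exactly what we are trying to pin down, so the argument must instead proceed by downward induction on $b$: reconstruct level $b-1$ from levels $a$ and $b$, then apply the inductive hypothesis to levels $a$ and $b-1$.

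Thus the real engine is the single step from $(a, b)$ to $(a, b-1)$: given the $a$-faces, the $b$-faces, and their incidences, recover the $(b-1)$-faces and their incidences to the $a$-faces. Here I would use that each $(b-1)$-face $G$ lies in exactly two $b$-faces when $b = d-1$, or in at least two $b$-faces in general, and that $G$ is the intersection $F_1 \cap F_2$ of two $b$-faces; the set of $a$-faces below $G$ is then $(\text{$a$-faces below }F_1) \cap (\text{$a$-faces below }F_2)$. The obstruction is to decide \emph{which} pairwise intersections $F_1 \cap F_2$ are genuine $(b-1)$-faces rather than faces of lower dimension. I expect this to be the main obstacle, and the way to resolve it is a maximality/connectivity condition: $F_1 \cap F_2$ is a $(b-1)$-face iff, among all pairs $(F_1', F_2')$ of $b$-faces with the same intersection set of $a$-faces, this set is as large as possible, and it is not properly contained in the intersection-set of any other such pair that is itself a valid intersection of two $b$-faces — i.e. the candidate $(b-1)$-faces are the maximal elements of the family $\{\,(\text{$a$-faces below }F_1)\cap(\text{$a$-faces below }F_2) : F_1 \ne F_2 \in \mathscr{F}^b(P)\,\}$ under inclusion. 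One must then verify (using that $\mathscr{F}^a(P)\to \mathscr{F}^a(F)$ is injective on faces, a consequence of $a < b$ and basic polytope theory, e.g. the diamond property and the fact that a face is the convex hull of its vertices) that these maximal sets biject with $\mathscr{F}^{b-1}(P)$ with the correct incidences. Closing the induction then yields the full isomorphism $\mathscr{F}_a^b(P) \to \mathscr{F}_a^b(Q)$ extending $\varphi$.
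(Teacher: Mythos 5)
The paper does not reproduce a proof of this theorem; it is stated as a citation to Gr\"unbaum, so there is no in-text proof to compare against. Evaluating your argument on its own merits: it is correct, and the downward-induction scheme is a clean way to organize it. The engine---that the $(b-1)$-faces of $P$ correspond exactly to the inclusion-maximal sets in $\{\mathscr{F}^a(F_1)\cap \mathscr{F}^a(F_2): F_1\neq F_2 \in \mathscr{F}^b(P)\}$---goes through because of three facts you implicitly invoke: (i) $F_1\cap F_2$ is always a face of dimension $\le b-1$ (it is a proper face of both $F_1$ and $F_2$); (ii) every $(b-1)$-face $G$ lies in at least two $b$-faces (since $b<d$ the interval $[G,P]$ is the face lattice of a polytope of dimension $d-b\ge 1$), and for any two such $F_1\ne F_2$ one has $F_1\cap F_2=G$ exactly; and (iii) for faces $H,H'$ of dimension $\ge a$, $\mathscr{F}^a(H)\subseteq\mathscr{F}^a(H')$ iff $H\subseteq H'$, because $H$ is the convex hull of the vertices of its $a$-faces. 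Point (iii) is what makes the maximality test dimension-detecting even though rank is not directly visible, resolving the worry you flag mid-proof. Your first long paragraph flirts with a ``recover the facets first'' approach and correctly recognizes it as circular when $b<d-1$; that detour can simply be cut, since the $b\to b-1$ step needs no facets. One small thing worth making explicit: after applying the inductive hypothesis to $(a,b-1)$, the incidences between an intermediate $c$-face $C$ and a $b$-face $B$ are also forced, since $C\subset B$ iff $\mathscr{F}^a(C)\subseteq\mathscr{F}^a(B)$, so the recovered $\mathscr{F}_a^{b-1}$ glues back onto $\mathscr{F}^b$ to yield the full $(a,b)$-skeleton isomorphism.
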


In other words, if $P$ is a $d$-polytope and $0 \leq a < b < d$, then the $(a,b)$-skeleton of $P$ is determined by $\mathscr{F}^a(P) \cup \mathscr{F}^b(P)$. Thus, our problem reduces to that of reconstructing a $d$-polytope from a set of $(a,b)$-skeleta.

\begin{problem}[Reconstruction]
\label{reconstruction_problem}
    Let $d \geq 2$. Characterize all
    \begin{gather*}
        0 \leq a_1 \leq b_1 < d,\\
        \vdots\\
        0 \leq a_n \leq b_n < d\phantom{,}
    \end{gather*}
    such that the face poset of every $d$-polytope $P$ is determined by the list of isomorphism types of $\mathscr{F}_{a_1}^{b_1}(P), \ldots,\allowbreak \mathscr{F}_{a_n}^{b_n}(P)$.
\end{problem}

Let us take this opportunity to answer Problem \ref{reconstruction_problem} for $d=2,3$.

\begin{observation}
\label{d=2}
    The face poset of any 2-polytope $P$ is determined by $\mathscr{F}^0(P)$, as well as by $\mathscr{F}^1(P)$. In other words, the number of vertices or edges in a polygon determines its face poset.
\end{observation}

\begin{observation}
\label{d=3}
    By Whitney's theorem \cite{whitney32}, the face poset of any 3-polytope $P$ is determined by $\mathscr{F}_0^1(P)$, as well as by $\mathscr{F}_1^2(P)$. Nothing else will suffice; for example, let $P$ be the cross-polytope and $Q$ the stacked polytope on six vertices. Then $f(P) = f(Q) = (6, 12, 8)$, so $\mathscr{F}^k(P) \cong \mathscr{F}^k(Q)$ for $k = 0,1,2$. However, $\mathscr{F}(P) \not\cong \mathscr{F}(Q)$.
\end{observation}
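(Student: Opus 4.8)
The plan is to prove the observation in three parts, corresponding to its three assertions. For the first assertion, that $\mathscr{F}_0^1(P)$ determines $\mathscr{F}(P)$, I would invoke Whitney's theorem directly. The point to make precise is that knowing the isomorphism type of the poset $\mathscr{F}_0^1(P)$ is the same as knowing the vertex-edge graph $G$ of $P$ (together with which graph-vertices lie on which graph-edges), so Whitney's statement applies verbatim: the facet boundaries of $P$ are exactly the induced, non-separating cycles of $G$. Reading off these cycles recovers $\mathscr{F}^2(P)$ and all incidences among $\mathscr{F}^0(P)$, $\mathscr{F}^1(P)$, and $\mathscr{F}^2(P)$, and adjoining the maximum element then yields all of $\mathscr{F}(P)$.

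For the second assertion, that $\mathscr{F}_1^2(P)$ determines $\mathscr{F}(P)$, I would dualize. If $P^\ast$ is a polar dual of $P$, then $P^\ast$ is again a $3$-polytope and $\mathscr{F}(P^\ast) \cong \mathscr{F}(P)^{\mathrm{op}}$; under this order-reversal the ridges of $P$ (which for $d=3$ are its edges) and the facets of $P$ correspond to the edges and vertices of $P^\ast$, so $\mathscr{F}_1^2(P)^{\mathrm{op}} \cong \mathscr{F}_0^1(P^\ast)$. Hence from the isomorphism type of $\mathscr{F}_1^2(P)$ I form the opposite poset, recognize it as the $1$-skeleton of the $3$-polytope $P^\ast$, apply the first part to recover $\mathscr{F}(P^\ast)$, and take the opposite once more to recover $\mathscr{F}(P)$.

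For the third assertion, that nothing else suffices, I would first observe that every $(a,b)$-skeleton of a $3$-polytope with $a=b$ is an antichain, so its isomorphism type carries only the single number $f_a$; meanwhile the only $(a,b)$-skeleta with $a<b$ are $\mathscr{F}_0^1$, $\mathscr{F}_1^2$, and $\mathscr{F}_0^2 = \mathscr{F}(P)$, the first two of which suffice by the above and the last trivially. Thus any collection of $(a,b)$-skeleta that does not already suffice must consist solely of antichains, and therefore conveys at most the $f$-vector of $P$. It then remains to exhibit two $3$-polytopes with equal $f$-vectors but non-isomorphic face posets: I would take $P$ the octahedron (the $3$-dimensional cross-polytope) and $Q$ a polytope obtained from a tetrahedron by two successive stackings. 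A routine count (each stacking adds $1$ vertex, $3$ edges, and $2$ facets) gives $f(P) = f(Q) = (6,12,8)$, so $\mathscr{F}^k(P) \cong \mathscr{F}^k(Q)$ for $k=0,1,2$; but the vertex-edge graph of $P$ is $K_{2,2,2}$, which is $4$-regular, whereas $Q$ has a vertex of degree $3$ (the last stacked vertex), so $\mathscr{F}(P) \not\cong \mathscr{F}(Q)$.

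I do not expect a genuine obstacle here: Whitney's theorem does all the structural work, and what remains is the bookkeeping that polar duality carries $(1,2)$-skeleta to $(0,1)$-skeleta, the $f$-vector arithmetic for the stacked polytope, and the degree-sequence comparison. The one place warranting a sentence of care is the identification of $\mathscr{F}_0^1(P)$ with the vertex-edge graph of $P$, since Whitney's result is phrased graph-theoretically.
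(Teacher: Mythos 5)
Your proposal is correct and follows the same route the paper has in mind: Whitney's theorem for the $(0,1)$-skeleton, polar duality for the $(1,2)$-skeleton, and the octahedron versus stacked polytope on six vertices as the counterexample. You also helpfully make explicit two things the paper leaves tacit—that the only $(a,b)$-skeleta for $d=3$ not covered by these cases are antichains carrying only $f$-vector data, and that the two example polytopes are distinguished by their vertex degree sequences.
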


It remains to answer Problem \ref{reconstruction_problem} for $d \geq 4$. Two additional theorems of Gr\"unbaum take us much of the distance. The first says that we can reconstruct a $d$-polytope from its $(d-2)$-skeleton; the second says that if $d \geq 5$, then the $(1,d-2)$-skeleton is sufficient.

\begin{theorem}[Gr\"unbaum {\cite[pp.~228]{grunbaum03}}]
    \label{(d-2)-skeleton}
    Let $P,Q$ be $d$-polytopes with $d \geq 2$. Any isomorphism $\varphi:\mathscr{F}_0^{d-2}(P) \to \mathscr{F}_0^{d-2}(Q)$ extends to an isomorphism $\mathscr{F}(P) \to \mathscr{F}(Q)$. Dually, any isomorphism $\varphi:\mathscr{F}_1^{d-1}(P) \to \mathscr{F}_1^{d-1}(Q)$ extends to an isomorphism $\mathscr{F}(P) \to \mathscr{F}(Q)$.
\end{theorem}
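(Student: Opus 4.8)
The plan is to recover $\mathscr{F}(P)$ from $\mathscr{C} := \mathscr{F}_0^{d-2}(P)$ by identifying, intrinsically from the abstract poset $\mathscr{C}$, which of its subcomplexes are boundary complexes of facets of $P$. This suffices: $\mathscr{F}(P)$ is obtained from $\mathscr{C}$ by adjoining, above each such boundary subcomplex $\partial F$, one new element $F$ covering exactly the maximal faces of $\partial F$, and then a top element $P$. So once the family $\{\partial F : F \in \mathscr{F}^{d-1}(P)\}$ is pinned down combinatorially, the isomorphism type of $\mathscr{F}(P)$ is forced, and any isomorphism $\varphi : \mathscr{F}_0^{d-2}(P) \to \mathscr{F}_0^{d-2}(Q)$ carries this family to the corresponding family for $Q$ and hence extends. (One might instead try to induct on $d$ via vertex figures, reconstructing each $\mathscr{F}(P/v)$ from its $(d-3)$-skeleton; but recovering how these reconstructed vertex figures glue together along the facets of $P$ is delicate, so I prefer the direct route.) The base cases $d=2,3$ are Observations \ref{d=2} and \ref{d=3}, and the dual statement follows by applying the theorem to the polar duals $P^{*},Q^{*}$, whose $(d-2)$-skeleta are the order-duals of $\mathscr{F}_1^{d-1}(P),\mathscr{F}_1^{d-1}(Q)$; so assume $d \geq 3$.

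The characterization I would establish is: a subcomplex $\mathscr{S} \subseteq \mathscr{C}$ equals $\partial F$ for a facet $F$ if and only if \emph{(i)} $\mathscr{S}$ is an induced subcomplex of $\mathscr{C}$, i.e.\ $\mathscr{S} = \{G \in \mathscr{C} : \text{every vertex of } G \text{ lies in } \mathscr{S}\}$; \emph{(ii)} $|\mathscr{S}| \cong S^{d-2}$; and \emph{(iii)} the induced subcomplex of $\mathscr{C}$ on the vertices not lying in $\mathscr{S}$ is connected and nonempty. Conditions (i) and (iii) are visibly invariants of the poset $\mathscr{C}$, and so is (ii): the combinatorial type of each polytopal face is determined by its own face lattice, and the faces of $\mathscr{S}$ are glued along common faces, so the homeomorphism type of $|\mathscr{S}|$ depends only on the poset.

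For the forward direction I would verify (i)--(iii) for $\mathscr{S} = \partial F$. Here (i) holds because any face $G$ of $P$ all of whose vertices lie in $F$ satisfies $G = \mathrm{conv}(V(G)) \subseteq F$, so $G$ is a face of $F$ (and conversely every face of $F$ of dimension $\leq d-2$ lies in $\mathscr{C}$); (ii) holds because $F$ is a $(d-1)$-polytope, so $\partial F \cong S^{d-2}$ and $\partial F$ is its own $(d-2)$-skeleton. For (iii) I would take a linear functional $\ell$ vanishing exactly on $F$ and positive on every other vertex of $P$; since $P \not\subseteq \mathrm{aff}(F)$ we have $\max_{P}\ell > 0$, so the face $T$ on which $\ell$ is maximized is disjoint from $F$; from any vertex $v \notin F$, repeatedly crossing an edge along which $\ell$ strictly increases yields a path of edges missing $V(F)$ and ending in $T$, and since the graph of $T$ is connected and also disjoint from $V(F)$, all vertices off $F$ lie in one component of the complement.

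The converse is the crux. Realizing everything geometrically, $|\mathscr{S}| \subseteq |\mathscr{C}| \subseteq \partial P \cong S^{d-1}$, and Jordan--Brouwer separation gives exactly two components $U_1,U_2$ of $S^{d-1}\setminus|\mathscr{S}|$, each (by tameness of polytopal embeddings) with frontier $|\mathscr{S}|$. If $U_1$ and $U_2$ each contained a vertex of $P$, a path of edges through $\mathscr{C}\setminus V(\mathscr{S})$ joining them (which exists by (iii)) would miss $|\mathscr{S}|$ entirely---an edge of $P$ meets the subcomplex $|\mathscr{S}|$ only in a common face, hence only possibly at its own endpoints, which lie off $|\mathscr{S}|$---contradicting separation. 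So some component, say $U_2$, contains no vertex. Now $\mathcal{G} := \{G : G \text{ a proper face of } P,\ \mathrm{relint}(G) \subseteq U_2\}$ is, by a short argument on connectedness of relative interiors, a nonempty up-set in the poset of proper faces with $U_2 = \bigcup_{G \in \mathcal{G}}\mathrm{relint}(G)$. A minimal $G_0 \in \mathcal{G}$ of dimension $\leq d-2$ would lie in $\mathscr{C}$ with all its vertices in $\overline{U_2}\setminus U_2 = |\mathscr{S}|$, hence in $V(\mathscr{S})$, forcing $G_0 \in \mathscr{S}$ by (i) and contradicting $\mathrm{relint}(G_0) \subseteq U_2$; so the minimal elements of $\mathcal{G}$ are facets, $\mathcal{G}$ is a set of facets, and $U_2$ is a disjoint union of their (relatively open) interiors. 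Connectedness of $U_2$ then forces $U_2 = \mathrm{relint}(F)$ for a single facet $F$, whence $|\mathscr{S}| = \partial U_2 = \partial F = |\partial F|$ and, both being subcomplexes of $\mathscr{C}$ with this realization, $\mathscr{S} = \partial F$. I expect the main obstacle to be handling this point-set topology cleanly---the Jordan--Brouwer separation and frontier statement for a polytopally embedded $(d-2)$-sphere in $S^{d-1}$, and the up-set analysis matching a vertex-free complementary region with the interior of one facet---in a way that is manifestly preserved by the poset isomorphism $\varphi$.
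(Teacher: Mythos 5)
The paper itself does not prove this theorem; it cites Gr\"unbaum's book (p.~228), and in its introduction it summarizes Gr\"unbaum's method as showing that ``the boundaries of facets are exactly the induced, non-separating subcomplexes of $\mathscr{C}$ homeomorphic to $S^{d-2}$.'' Your proof is correct and follows exactly this approach, with the same three-part characterization (induced, spherical, non-separating) and the same Jordan--Brouwer separation argument to recover each facet from its boundary subcomplex.
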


\begin{theorem}[Gr\"unbaum {\cite[pp.~234]{grunbaum03}}]
    \label{middle_interval}
    Let $P,Q$ be $d$-polytopes with $d \geq 5$. Any isomorphism $\varphi:\mathscr{F}_1^{d-2}(P) \to \mathscr{F}_1^{d-2}(Q)$ extends to an isomorphism $\mathscr{F}(P) \to \mathscr{F}(Q)$.
\end{theorem}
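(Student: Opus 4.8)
The plan is to pull $\varphi$ back to the $(d-2)$-skeleton and then invoke Theorem~\ref{(d-2)-skeleton}: it suffices to recover, canonically from $\mathscr{F}_1^{d-2}(P)$, the vertex set $\mathscr{F}^0(P)$ together with all vertex--edge incidences, i.e.\ to extend $\varphi$ to an isomorphism $\mathscr{F}_0^{d-2}(P)\to\mathscr{F}_0^{d-2}(Q)$; Theorem~\ref{(d-2)-skeleton} then extends it to $\mathscr{F}(P)\to\mathscr{F}(Q)$. Since $d\geq 5$ we have $3\leq d-2$, so $\mathscr{F}_1^{d-2}(P)$ contains every face of $P$ of dimension $1$, $2$, or $3$ with all incidences among them. (It is precisely here that $d\geq 5$ enters; for $d=4$ the $3$-faces are invisible in $\mathscr{F}_1^{d-2}(P)=\mathscr{F}_1^2(P)$, which is why that case is Problem~\ref{edge-ridge-problem}.) In particular, for each $3$-face $G$ it contains the subposet $\mathscr{F}^1(G)\cup\mathscr{F}^2(G)$, which by Observation~\ref{d=3} (Whitney's theorem) canonically determines $\mathscr{F}(G)$; this recovers the vertices of $G$, their incidences with the edges of $G$, and the cyclic structure of every $2$-face $T<G$. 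First I would carry out this reconstruction for every $3$-face of $P$ simultaneously.

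The next step is to glue these local reconstructions into a single vertex set. No vertex or edge of $P$ is lost: the vertex figure $P/v$ has dimension $d-1\geq 4$ and hence possesses a $2$-face, which corresponds to a $3$-face of $P$ through $v$, and similarly every edge lies in a $2$-face and hence in a $3$-face. To decide when a reconstructed vertex of $G$ equals one of $G'$ I would glue along shared $2$-faces: if $T<G$ and $T<G'$, then the two Whitney-reconstructions see $T$ with the same, intrinsic, cyclic structure, so the vertices of the polygon $T$ admit a canonical labelling --- a vertex of $T$ by the unordered pair of edges of $T$ cyclically adjacent to it --- and this labelling identifies the vertices of $T$ as seen inside $G$ with those as seen inside $G'$. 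Let $\equiv$ be the equivalence relation on the disjoint union of the local vertex sets generated by all such identifications; the candidate vertex set is the quotient, with $[v]<e$ declared exactly when some incarnation of $v$ is incident to $e$ inside some $3$-face. Since every edge lies in a $3$-face, this recovers the true vertex--edge incidences.

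The hard part will be showing $\equiv$ is correct --- neither too coarse nor too fine. It is not too coarse because a single gluing step across a shared $2$-face $T$ matches $v_G$ with $v_{G'}$ only when both, as vertices of the polygon $T$, carry the same cyclically-adjacent pair $\{e,f\}$ of edges of $T$, and then both equal the vertex $e\cap f$ of $P$; so $\equiv$ relates only incarnations of a single vertex of $P$. For ``not too fine'' --- that all incarnations of a fixed vertex $v$ get merged --- I need that the $3$-faces of $P$ containing $v$ are connected under the relation ``share a $2$-face containing $v$''. Passing through the vertex figure, a $3$-face through $v$ is a $2$-face of the $(d-1)$-polytope $P/v$ and a $2$-face through $v$ is an edge of $P/v$, so this reduces to the lemma that \emph{the $2$-faces of any polytope of dimension $\geq 2$ are connected under ``share an edge''}. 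I would prove this by induction on the dimension $m$: the cases $m=2$ and $m=3$ are immediate (for $m=3$ it is connectedness of the dual graph), and for $m\geq 4$ one joins two $2$-faces $A,B$ by choosing facets $F_A\supseteq A$, $F_B\supseteq B$, a facet path $F_A=F_0,\dots,F_k=F_B$ in the dual graph, a $2$-face $C_i$ inside each ridge $F_i\cap F_{i+1}$ (which has dimension $m-2\geq 2$), and concatenating paths $A\to C_0\to C_1\to\cdots\to C_{k-1}\to B$ produced inside the facets $F_i$ by the inductive hypothesis. Granting the lemma, $\equiv$ has exactly one class per vertex of $P$; the whole construction is canonical in $\mathscr{F}_1^{d-2}(P)$, so $\varphi$ induces the desired isomorphism $\mathscr{F}_0^{d-2}(P)\to\mathscr{F}_0^{d-2}(Q)$, and Theorem~\ref{(d-2)-skeleton} finishes the proof. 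The reductions at either end --- from the $3$-faces via Whitney, and from the $(d-2)$-skeleton via Theorem~\ref{(d-2)-skeleton} --- are routine; the gluing, and within it the ``share an edge'' connectivity lemma, is the main obstacle.
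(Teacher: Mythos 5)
The paper does not prove this theorem --- it is cited to Gr\"unbaum's book --- so there is no in-paper argument to compare against; the review is simply of whether your argument is sound, and it is. Your reduction is the natural one: since $d\geq 5$, the $3$-faces and the poset $\mathscr{F}_1^2(G)$ of each $3$-face $G$ are visible in $\mathscr{F}_1^{d-2}(P)$, and Observation~\ref{d=3} recovers $\mathscr{F}(G)$ canonically (uniquely, since a vertex of a $3$-polytope is determined by its pair of incident edges inside any $2$-face, or equally by its set of incident edges); the gluing along shared $2$-faces is correct, the ``not too coarse'' step is exactly the intrinsic-labelling observation, and the ``not too fine'' step reduces via the vertex figure $P/v$ (dimension $d-1\geq 4$) to your connectivity lemma on $2$-faces, whose induction (base $m=3$ via dual-graph connectedness, inductive step via a facet path with $2$-faces chosen in the ridges, which have dimension $m-2\geq 2$) is correct. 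Two small points worth making explicit in a final write-up: once you have the vertex set and vertex--edge incidences, the full poset $\mathscr{F}_0^{d-2}(P)$ follows because $v<F$ for $\dim F\geq 1$ iff $v<E<F$ for some edge $E$ (alternatively invoke Theorem~\ref{interval}); and the induced map on quotient vertex sets is well defined precisely because each local Whitney extension is unique, so that $\varphi$ commutes with the gluing data. This is essentially the classical route (reduce to the $3$-dimensional Whitney/Steinitz case locally, then glue, then invoke the $(d-2)$-skeleton theorem), and I see no gap.
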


Recall that for each $d \geq 3$, there exist distinct $d$-polytopes with identical $(d-3)$-skeleta. Dually, there exist distinct $d$-polytopes with identical dual $(d-3)$-skeleta. Thus, for $d \geq 5$, Theorem \ref{middle_interval} answers Problem \ref{reconstruction_problem} in the $n=1$ case: we can reconstruct every $d$-polytope $P$ from $\mathscr{F}_a^b(P)$ if and only if $\{1,\ldots,d-2\} \subseteq \{a,\ldots,b\}$.

\subsection{CW complexes}
CW complexes are a generalization of the boundary complexes of polytopes. In our 4-polytope reconstruction argument in Section \ref{s:4-polytopes}, we will consider CW complexes formed from subsets of a polytope's vertices, edges, and ridges.

\begin{definition}
    For $d \geq 1$, a \emph{CW $(d-1)$-complex} is a topological space built recursively as follows.
    \begin{itemize}
        \item A CW 0-complex is a finite set of points.
        \item To construct a CW $(d-1)$-complex with $d>1$, we first construct a CW $(d-2)$-complex $Y$. We then attach closed $(d-1)$-balls $F_1,\ldots,F_n$ to $Y$ with gluing maps $\partial F_1, \ldots, \partial F_n \to Y$.
    \end{itemize}
    The \emph{faces} of a CW $(d-1)$-complex are the closed 0-, \ldots, $(d-1)$-balls used to construct it, and the empty set. As with polytopes, a \emph{$k$-face} is a face of dimension $k$, and a 0, 1, $(d-2)$-, or $(d-1)$-face is called a \emph{vertex}, \emph{edge}, \emph{ridge}, or \emph{facet}, respectively.
\end{definition}

\begin{definition}
    A CW complex is \emph{regular} if each gluing map in its construction is a homeomorphism. It is \emph{strongly regular} if, in addition, the intersection of any two faces is a (possibly empty) face.
\end{definition}

If $X$ is a strongly regular CW $(d-1)$-complex, we define the face poset $\mathscr{F}(X)$, the $(a,b)$-skeleton $\mathscr{F}_a^b(X)$, and so on, identically as for a $d$-polytope.

\begin{definition}
    Let $X$ be a strongly regular CW complex and $Y \subseteq \mathscr{F}(X)$. The \emph{geometric realization} of $Y$ is the set $|Y| := \bigcup_{G \in Y}G \subseteq X$, endowed with the subspace topology.
\end{definition}

\begin{definition}
    A regular CW complex is \emph{pure} if each of its faces is contained in at least one facet.
\end{definition}

\begin{definition}
    A \emph{$(d-1)$-pseudomanifold} is a pure, strongly regular CW $(d-1)$-complex $X$ such that every ridge of $X$ is contained in exactly two facets.
\end{definition}

\begin{definition}
    A \emph{CW $(d-1)$-manifold} is a $(d-1)$-pseudomanifold homeomorphic to a manifold. In particular, a \emph{CW $(d-1)$-sphere} is a $(d-1)$-pseudomanifold homeomorphic to $S^{d-1}$.
\end{definition}

Note that the boundary complex of any $d$-polytope is a CW $(d-1)$-sphere.

\begin{definition}
    Let $X$ be a CW pseudomanifold. A \emph{singularity} of $X$ is a nonempty face $G \subseteq X$ such that for an arbitrary point $g$ in the relative interior of $G$, no open neighborhood of $g$ in $X$ is homeomorphic to a ball.
\end{definition}

A CW manifold is therefore a pseudomanifold with no singularities.

\subsection{Simplicial complexes}
Simplicial complexes will be the setting for our reconstruction results in Section \ref{s:simplicial}.

\begin{definition}
    A \emph{simplicial complex} is a strongly regular CW complex in which every face is a simplex.
\end{definition}

Suppose $X$ is a simplicial complex on a vertex set $V$. Sometimes, we will not regard the faces of $X$ as topological balls, but rather as subsets of $V$. So if $G$ is a $k$-simplex in $X$ with vertices $v_1, \ldots, v_{k+1} \in V$, we may write $G = \{v_1, \ldots, v_{k+1}\}$. If $B$ is the $(k-1)$-face of $G$ disjoint from $v_1$, we may write $G = B \cup v_1$ or $B = G \backslash v_1$.

\begin{definition}
    Let $G$ be a face of some simplicial complex $X$. The \emph{link} of $G$, denoted $\operatorname{Lk}(G,X)$, is the subcomplex of $X$ with faces $\{A \backslash G : A \in \mathscr{F}(X),G \subseteq A\}$.
\end{definition}

\begin{definition}
    Let $X$ be a connected, simplicial $(d-1)$-pseudomanifold. Then $X$ is \emph{normal} if for all $k$-faces $G$ with $0 \leq k \leq d-3$, $\operatorname{Lk}(G,X)$ is connected.
\end{definition}

The final complexes we introduce are \emph{homology manifolds}, which lie between normal pseudomanifolds and simplicial manifolds in generality. Homology manifolds may be defined with coefficients in $\mathds{Z}$ or in any fixed field; for the homology manifolds in this paper, our field of choice is $\mathds{Z}_2$.

\begin{definition}
    A \emph{homology $(d-1)$-manifold} is a connected, simplicial $(d-1)$-pseudomanifold $X$ such that for each $0 \leq k \leq d-2$ and $k$-face $G \subseteq X$, $\operatorname{Lk}(G,X)$ has the homology of a $(d-k-2)$-sphere. That is,
    \[ H_i(\operatorname{Lk}(G,X),\mathds{Z}_2) =
    \begin{cases}
        \mathds{Z}_2, & i=0\\
        0, & i=1,\ldots,d-k-3\\
        \mathds{Z}_2, & i=d-k-2
    \end{cases}.
    \]
\end{definition}

We will use the following theorem of Dancis to prove our positive reconstruction results for homology manifolds and pseudomanifolds.

\begin{theorem}[Dancis \cite{dancis84}]
    \label{dancis}
    Let $X$ and $Y$ be simplicial homology $(d-1)$-manifolds. Then any isomorphism $\varphi:\mathscr{F}_0^{\lceil d/2 \rceil}(P) \to \mathscr{F}_0^{\lceil d/2 \rceil}(Q)$ extends to an isomorphism $\mathscr{F}(P) \to \mathscr{F}(Q)$. If $d$ is odd and $H_{(d-1)/2}(X,\mathds{Z}_2) = H_{(d-1)/2}(Y,\mathds{Z}_2) = 0$, then any isomorphism $\varphi:\mathscr{F}_0^{(d-1)/2}(P) \to \mathscr{F}_0^{(d-1)/2}(Q)$ likewise extends to an isomorphism $\mathscr{F}(P) \to \mathscr{F}(Q)$.
\end{theorem}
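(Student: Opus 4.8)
The plan is to reconstruct $X$ from $\mathscr{F}_0^k(X)$ — where $k=\lceil d/2\rceil$ in the first statement and $k=\tfrac{d-1}{2}$ in the second — by a canonical procedure that recovers the faces of $X$ one dimension at a time, in increasing order. At stage $j>k$ we already know the $(j-1)$-skeleton of $X$, and the $j$-faces we must find are among the $j$-simplices $\sigma$ whose boundary $\partial\sigma$ is already present; the task is to decide which of these $\sigma$ lie in $X$, using only poset-isomorphism invariants of what has been reconstructed so far together with the standing hypothesis that $X$ is a homology $(d-1)$-manifold. Because the procedure is canonical, running it in parallel for $X$ and $Y$ shows that any isomorphism $\varphi:\mathscr{F}_0^k(X)\to\mathscr{F}_0^k(Y)$ extends one dimension at a time to an isomorphism of the full face posets. (That the reconstruction halts at dimension $d-1$ is itself part of the manifold hypothesis.)

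For the generic stages I would pass to vertex links and induct on $d$. If $v$ is a vertex of a candidate $j$-simplex $\sigma$ and $\tau=\sigma\setminus v$, then $\sigma\in X$ if and only if $\tau\in\operatorname{Lk}(v,X)$; and $\operatorname{Lk}(v,X)$ is a homology $(d-2)$-manifold — indeed a homology $(d-2)$-sphere, $v$ being a vertex — whose $i$-faces are precisely the $(i+1)$-faces of $X$ through $v$, so its $(j-2)$-skeleton is already known. One checks that $j-2$ meets the threshold that Theorem \ref{dancis} demands in dimension $d-1$: for even $d$ the link, being a homology sphere, has vanishing middle homology, so its $(d/2-1)$-skeleton suffices and $j\geq d/2+1$ is exactly enough; for odd $d$ one needs the link's $\bigl(\tfrac{d-1}{2}\bigr)$-skeleton, and $j\geq\tfrac{d+3}{2}$ is enough. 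In either case the inductive hypothesis reconstructs $\operatorname{Lk}(v,X)$ far enough to test whether $\tau$ belongs to it, settling stage $j$.

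This link-induction clears every stage of the first statement and all but one stage of the second. What it does not reach is the base case $d=3$ of the second statement — a triangulated homology $2$-sphere is determined by its graph, which is in essence Whitney's theorem on planar triangulations — and, for odd $d\geq 5$ with $H_{(d-1)/2}(X;\mathds{Z}_2)=0$, the lone stage recovering the $\bigl(\tfrac{d+1}{2}\bigr)$-faces from the $\bigl(\tfrac{d-1}{2}\bigr)$-skeleton. For these I would give a direct homological criterion: with $n=\tfrac{d-1}{2}$, a candidate $(n+1)$-simplex $\sigma$ with $\partial\sigma$ present is a face of $X$ exactly when the $n$-cycle $\partial\sigma$ is ``non-separating'' inside the $n$-skeleton of $X$ in the appropriate sense — the same flavour of condition that appears in the reconstructions of Whitney and Gr\"unbaum. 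Verifying this criterion (and that it is computable from $\mathscr{F}_0^n(X)$) runs through Poincar\'e/Alexander duality in $X$, using that every vertex link is a homology sphere and, decisively, that $H_n(X;\mathds{Z}_2)=0$; this vanishing is precisely what lets $\sigma$ be recovered one dimension below the generic bound, and that $\lceil d/2\rceil$ is in general optimal is witnessed, for even $d$, by cyclic polytopes.

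The main obstacle I anticipate is this last homological criterion: pinning down the correct ``non-separating'' condition on the cycle $\partial\sigma$, proving it is equivalent to $\sigma$ being a face, and tracing exactly why the arithmetic lands at $\lceil d/2\rceil$ in general and at $\tfrac{d-1}{2}$ under the middle-homology hypothesis. The remaining ingredients — the reduction to a canonical, dimension-by-dimension procedure and the link-induction — are essentially bookkeeping, provided one is careful with the small-dimensional base cases and with the connectedness of links in low dimensions.
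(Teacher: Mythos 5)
The paper states Theorem~\ref{dancis} with a citation to Dancis~\cite{dancis84} and no proof, so there is nothing in the paper to compare your attempt against; what follows assesses your sketch on its own terms.

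Your link-induction arithmetic checks out, but the ``lone stage'' you defer is far more load-bearing than your framing suggests. To reconstruct a homology $(d-1)$-manifold from its $(d/2)$-skeleton with $d$ even, your very first step passes to a vertex link --- a homology $(d-2)$-sphere, so $d'=d-1$ is odd with vanishing middle homology --- of which you know only the $(d/2-1)$-skeleton. That is below the $\lceil(d-1)/2\rceil$ threshold of the first statement, so you must invoke the \emph{second} statement at $d-1$. Unwinding the recursion, the second statement at each odd level $d'$ reduces to the criterion at $d'$ plus the first statement at $d'-1$, which in turn needs the second statement at $d'-2$, and so on down to the Whitney base case. The ``homological criterion'' is therefore not an isolated stage to be patched afterward; it drives the entire induction for both halves of the theorem, and until it is proved nothing here is.

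There is also a cleaner framework that makes the dimension-by-dimension scaffolding unnecessary and is, I believe, close to how Dancis actually argues. Call $\sigma$ a \emph{missing face} of $X$ if $\partial\sigma \subseteq X$ but $\sigma\notin X$. If $X$ and $Y$ are simplicial complexes whose $k$-skeleta are identified and neither has a missing face of dimension greater than $k$, then the identification extends uniquely to the full complexes: a face of $X$ of minimal dimension with no counterpart in $Y$ would be a missing face of $Y$ of dimension $>k$, and symmetrically. So the whole of Theorem~\ref{dancis} is equivalent to the single assertion that a simplicial homology $(d-1)$-manifold has no missing face of dimension exceeding $\lceil d/2\rceil$, improving to $\frac{d-1}{2}$ under the middle-homology hypothesis --- and this is your ``homological criterion'' in purified form. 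The Poincar\'e/Alexander duality you gesture at lives entirely inside this one lemma; once it is established, the reconstruction follows from the elementary observation above, and the link-induction can be discarded.
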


\section{Reconstructing a 4-polytope from edges and ridges}
\label{s:4-polytopes}
In this section, we will prove that the face poset of a 4-polytope is uniquely determined by its edge-ridge incidences. We begin by showing how to reconstruct the facets of a CW 3-sphere from its $(1,2)$-skeleton.

\begin{lemma}
    \label{facet}
    Let $X$ be a strongly regular CW 3-sphere and $\mathscr{E} \subseteq \mathscr{F}^1(X), \mathscr{R} \subseteq \mathscr{F}^2(X)$ nonempty sets. Then there exists a facet $F$ with $\mathscr{F}_1^2(F) = \mathscr{E} \cup \mathscr{R}$ if and only if the following hold:
    \begin{enumerate}
        \item For each ridge $R \in \mathscr{R}$, every edge of $R$ is in $\mathscr{E}$.
        \item Each edge $E \in \mathscr{E}$ belongs to exactly two ridges in $\mathscr{R}$.
        \item The incidence graphs of $\mathscr{E} \cup \mathscr{R}$ and $\mathscr{F}_1^2(X)\backslash(\mathscr{E} \cup \mathscr{R})$ are connected.
    \end{enumerate}
\end{lemma}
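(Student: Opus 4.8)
The plan is to prove both directions of the equivalence, with the "if" direction being the substantive one. For the forward ("only if") direction, suppose $F$ is a facet with $\mathscr{F}_1^2(F) = \mathscr{E} \cup \mathscr{R}$. Condition (1) is immediate since $F$ is a polytope-like cell and each of its ridges $R$ has all its edges among the edges of $F$. Condition (2) follows because $\partial F$ is a $2$-sphere, so each of its edges lies in exactly two of its $2$-faces. For condition (3), the incidence graph of $\mathscr{F}_1^2(F)$ is connected because $\partial F$ is connected and pure; and the incidence graph of the complement corresponds to the closure of $X \setminus F$, which is a $3$-ball (since $X$ is a $3$-sphere and $F$ is a closed $3$-cell whose boundary is an embedded $2$-sphere, by the Schoenflies-type / Alexander duality reasoning the complement is a ball), hence also has connected edge-ridge incidence graph.

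For the reverse ("if") direction, assume (1)--(3). First I would use (1) and (2) to show that $|\mathscr{R}|$, as a subcomplex of $X$, is a closed $2$-pseudomanifold: every ridge contributes its edges (by (1)), and every edge in $\mathscr{E}$ is in exactly two ridges of $\mathscr{R}$ (by (2)), so $|\mathscr{R}|$ is a pure strongly regular CW $2$-complex in which every edge lies in exactly two $2$-faces. Then I would invoke connectivity: the incidence graph of $\mathscr{E}\cup\mathscr{R}$ being connected (from (3)) makes $|\mathscr{R}|$ a connected closed surface embedded in $S^3$. The key step is to show this surface is a $2$-sphere and that it bounds, on one side, a region whose cells are exactly $\mathscr{E}\cup\mathscr{R}$ together with the ridges of $X$ "inside," and that this region is a single facet. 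Here is where Alexander duality enters: an embedded connected closed surface $\Sigma$ in $S^3$ separates $S^3$ into exactly two components iff $\Sigma$ is orientable (over $\mathbb{Z}_2$, $H_2(\Sigma)=\mathbb{Z}_2$ forces $S^3\setminus\Sigma$ to have exactly two components by Alexander duality, $\tilde H_0(S^3\setminus\Sigma;\mathbb{Z}_2)\cong \tilde H^2(\Sigma;\mathbb{Z}_2)$). The complement-connectivity hypothesis in (3) says the complementary subcomplex $\mathscr{F}_1^2(X)\setminus(\mathscr{E}\cup\mathscr{R})$ has connected incidence graph, which I would use to argue that one of the two sides of $\Sigma$ is cell-free in dimension $\geq 1$ except along $\Sigma$ itself — equivalently, the facets of $X$ split into those "inside" $\Sigma$ and those "outside," and the outside piece's $(1,2)$-skeleton, being connected and equal to $\mathscr{F}_1^2(X)\setminus(\mathscr{E}\cup\mathscr{R})$, is the skeleton of a single $3$-ball; hence the inside is a single $3$-cell $F$ with $\mathscr{F}_1^2(F)=\mathscr{E}\cup\mathscr{R}$.

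The main obstacle I anticipate is making rigorous the passage from "the edge-ridge incidence graph of the complement is connected" to "$X \setminus F$ is a single facet (a $3$-ball)" — i.e., ruling out the possibility that $\mathscr{E}\cup\mathscr{R}$ is the skeleton of a union of several facets glued along lower-dimensional faces, or that $\Sigma$ fails to be a sphere, or that $\Sigma$ is a sphere but neither side is a ball. The resolution should combine: (a) Alexander duality to control the number of complementary components of $|\mathscr R|$ in $S^3$ and to force $\Sigma = S^2$; (b) the fact that in a CW $3$-sphere, a subcomplex cut out by a separating embedded $2$-sphere yields two CW $3$-balls, each a union of facets of $X$ glued along a subcomplex; and (c) the two connectivity conditions in (3), which prevent either side from being a union of two or more facets — if a side were such a union, the shared internal ridges would belong to the subcomplex, and a parity/counting argument using condition (2) (applied to the internal edges) would be contradicted, or else the incidence graph would disconnect. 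I would carry out the counting argument carefully: condition (2) says edges in $\mathscr{E}$ see exactly two ridges in $\mathscr{R}$, which pins down that no ridge of $\mathscr{R}$ is an "internal" ridge shared by two facets inside $\Sigma$, forcing the inside to be a single facet.
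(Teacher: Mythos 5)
Your high-level plan is on the right track — form $|\mathscr{R}|$ (with its lower faces), apply Alexander duality to show it separates $S^3$ into two components, and then use the two connectivity hypotheses to argue one component is a single facet whose boundary is exactly $\mathscr{E}\cup\mathscr{R}$. That is indeed the paper's strategy. But there are three concrete gaps in your proposal.

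First, $|\mathscr{R}|$ is not in general a closed \emph{surface}; it is a closed $2$-\emph{pseudomanifold}, which can have non-manifold points at vertices where the link of a vertex in $|\mathscr{R}|$ is a disjoint union of several circles rather than a single circle. So the step ``$\Sigma$ is a connected closed surface embedded in $S^3$'' is unjustified. The paper handles this by performing surgery near each singular vertex (removing a small ball and capping off with disjoint discs) to produce an honest embedded $2$-manifold $M'$, which must then be orientable, and deduces $\tilde H^2(|M|;\mathbb{Z})\cong\mathbb{Z}$. Your instinct to work with $\mathbb{Z}_2$ coefficients could actually bypass orientability — a connected closed $2$-pseudomanifold always has $H_2(\cdot;\mathbb{Z}_2)\cong\mathbb{Z}_2$ directly from the chain complex — but then you must argue at the pseudomanifold level, not claim $\Sigma$ is a surface. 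As written, the claim is false and the step is a gap.

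Second, the step you flag as ``key,'' showing $\Sigma$ is a $2$-sphere, is neither needed nor provable at that point in the argument. All that is required is that $S^3\setminus|\mathscr{R}|$ has exactly two components, which Alexander duality gives without any control on the topology of $|\mathscr{R}|$ beyond $\tilde H_2\cong\mathbb{Z}_2$ (or $\mathbb{Z}$). You only learn $|\mathscr{R}|\cong S^2$ \emph{after} concluding it bounds a facet, so building the proof around a Schoenflies-type step puts the cart before the horse.

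Third, your endgame is logically muddled. It is simply false that ``neither side'' can be a union of two or more facets: the side opposite the facet $F$ contains \emph{all} the remaining facets of $X$. What the complement-connectivity half of condition (3) actually rules out is that \emph{both} sides contain two or more facets: if both $A$ and $B$ contained $\geq 2$ facets, each would contain (in its interior) a ridge of $X$ not in $\mathscr{R}$, and those two ridges would lie in different components of the incidence graph of $\mathscr{F}_1^2(X)\setminus(\mathscr{E}\cup\mathscr{R})$, contradicting (3). Also, condition (2) does not ``pin down that no ridge of $\mathscr{R}$ is internal'' — ridges of $\mathscr{R}$ form $|\mathscr{R}|$ and are never internal to a side; the danger is ridges \emph{not} in $\mathscr{R}$, and condition (2) says nothing about those. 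Once you know one side is $\operatorname{int}F$ for a single facet $F$, you have $\partial F\subseteq|M|$, and you then use the \emph{other} half of condition (3) (connectivity of the incidence graph of $\mathscr{E}\cup\mathscr{R}$) together with condition (2) to conclude $\partial F = |M|$: condition (2) prevents any edge of $\partial F$ from being incident to a ridge of $\mathscr{R}\setminus\partial F$, so if $\mathscr{E}\cup\mathscr{R}$ strictly contained $\mathscr{F}_1^2(F)$ its incidence graph would disconnect. The ``parity/counting argument applied to the internal edges'' you propose is a red herring.
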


\begin{proof}
    One direction is immediate: if there is such a facet, then $\mathscr{E}$ and $\mathscr{R}$ must satisfy conditions 1-3. To prove the other direction, assume $\mathscr{E}$ and $\mathscr{R}$ satisfy these conditions.

    Let $M \subset \mathscr{F}(X)$ be the set of all ridges in $\mathscr{R}$ and all their faces; that is, $M := \bigcup_{R \in \mathscr{R}} \mathscr{F}(R)$. Observe that $|M|$ is a connected 2-pseudomanifold embedded in $S^3$. We will show that the complement of $|M|$ in $S^3$ has two connected components.

    Since $|M|$ is a 2-pseudomanifold, any singularity of $|M|$ is located at a vertex. Let us construct a new manifold from $|M|$ embedded in $S^3$ by removing these singularities.

    Let $v$ be a vertex of $X$ which is a singularity in $|M|$. Draw an open ball $B \subset X$ around $v$, sufficiently small that $B$ only intersects the faces of $X$ containing $v$. Then $|M| \cap \partial B$ is a disjoint union of circles $C_1, \ldots, C_n$. Delete $|M| \cap B$ from $|M|$, and glue a disc $D_i$ to each circle $C_i$ so that $D_1, \ldots, D_n$ are disjoint and embedded in $B$. Repeat this operation for each singularity of $|M|$, and call the resulting manifold $M'$.

    Since $M'$ is a 2-manifold embedded in $S^3$, $M'$ must be orientable. Thus, $|M|$ is orientable as well. It follows that $\tilde{H}^2(|M|,\mathds{Z}) = \mathds{Z}$, so by Alexander duality, $\tilde{H}_0(X \backslash |M|,\mathds{Z}) = \mathds{Z}$. We may conclude that $X \backslash |M|$ has two connected components.

    Let $A,B$ be the connected components of $X \backslash |M|$. Then the closures of $A$ and $B$ in $X$ must each be the union of a nonempty set of facets. We claim that either $A$ or $B$ is the interior of a single facet.

    By way of contradiction, suppose $A$ and $B$ each contain the interiors of at least two facets. Let $F$ be a facet with $\operatorname{int} F \subset A$. Since $\operatorname{int} F \neq A$, we know $\partial F \not \subseteq M$, so there exists a ridge $R \in \partial F \backslash M$. Thus, $\operatorname{relint}R \subset A$. By a similar argument, there exists a ridge $R'$ such that $\operatorname{relint}R' \subset B$. However, this places $R$ and $R'$ in different connected components of the incidence graph of $\mathscr{F}_1^2(X) \backslash (\mathscr{E} \cup \mathscr{R})$, contradicting our connectedness assumption.

    We have shown that either $A$ or $B$ is the interior of some facet $F$. Thus, $\partial F \subseteq M$. Any edge $E$ in $\partial F$ belongs to two ridges in $\partial F$, and by assumption, these must be the only ridges in $M$ containing $E$. In other words, there can be no incidence between a face in $(\mathscr{E} \cup \mathscr{R}) \cap \partial F$ and a face in $(\mathscr{E} \cup \mathscr{R}) \backslash \partial F$. Since the incidence graph of $\mathscr{E} \cup \mathscr{R}$ is connected, it follows that $\partial F = M$ and $\mathscr{F}_1^2(F) = \mathscr{E} \cup \mathscr{R}$.
\end{proof}

\begin{lemma}
    \label{3-sphere}
    Let $X,Y$ be strongly regular CW 3-spheres and $\varphi:\mathscr{F}_1^2(X) \to \mathscr{F}_1^2(Y)$ an isomorphism. Then $\varphi$ extends to an isomorphism $\mathscr{F}_1^3(X) \to \mathscr{F}_1^3(Y)$.
\end{lemma}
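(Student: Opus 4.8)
The plan is to extend $\varphi$ by reading off the facets of $X$ from its $(1,2)$-skeleton via Lemma~\ref{facet}, and matching them with the facets of $Y$ produced the same way. First note that, $X$ and $Y$ being pure strongly regular CW 3-spheres, $\varphi$ automatically restricts to bijections $\mathscr{F}^1(X)\to\mathscr{F}^1(Y)$ and $\mathscr{F}^2(X)\to\mathscr{F}^2(Y)$: the edges are precisely the minimal elements of $\mathscr{F}_1^2$ and the ridges the maximal ones. Now fix a facet $F$ of $X$ and set $\mathscr{E}:=\mathscr{F}^1(F)$, $\mathscr{R}:=\mathscr{F}^2(F)$. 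Since $F$ exists, Lemma~\ref{facet} says $\mathscr{E},\mathscr{R}$ satisfy conditions~1--3. These conditions speak only about incidences inside $\mathscr{F}_1^2(X)$ and about connectedness of incidence graphs, so $\varphi(\mathscr{E})\subseteq\mathscr{F}^1(Y)$ and $\varphi(\mathscr{R})\subseteq\mathscr{F}^2(Y)$ satisfy the analogous conditions; Lemma~\ref{facet}, now applied to $Y$, furnishes a facet $F'$ of $Y$ with $\mathscr{F}_1^2(F')=\varphi(\mathscr{E})\cup\varphi(\mathscr{R})$. We aim to define $\bar\varphi:\mathscr{F}_1^3(X)\to\mathscr{F}_1^3(Y)$ by letting it agree with $\varphi$ on edges and ridges and sending $F\mapsto F'$.

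The only thing that can fail is that $F'$ need not be unique. To control this, suppose a strongly regular CW 3-sphere $Z$ had distinct facets $G_1\ne G_2$ with $|\partial G_1|=|\partial G_2|$. As in the proof of Lemma~\ref{facet}, $|\partial G_1|$ is an orientable 2-pseudomanifold in $|Z|=S^3$ whose singularities all lie at vertices, and the smoothing argument there gives $\tilde H^2(|\partial G_1|,\mathds{Z})=\mathds{Z}$, whence by Alexander duality $S^3\backslash|\partial G_1|$ has exactly two components; since $\operatorname{int}G_1$ and $\operatorname{int}G_2$ are disjoint, connected, and disjoint from $|\partial G_1|$, they must \emph{be} those two components, so $Z=G_1\cup G_2$ and $Z$ has exactly two facets --- each then bounded by the entire $2$-skeleton of $Z$. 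Conversely, a $3$-sphere with exactly two facets has them sharing their whole boundary (every ridge lies in both). Hence, provided $X$ is not of this degenerate type, $F'$ is well defined; and then $Y$ is not of this type either, since a facet $G$ of $Y$ with $\mathscr{F}_1^2(G)=\mathscr{F}_1^2(Y)$ would, pulling $\varphi^{-1}$ back through Lemma~\ref{facet}, force a facet of $X$ bounded by all of $\mathscr{F}_1^2(X)$ and hence (by the ridge condition) only two facets in $X$. Running the whole construction with $\varphi^{-1}$ produces $G\mapsto G'$ inverse to $F\mapsto F'$, so $\bar\varphi$ is a bijection on facets. If instead $X$ --- equivalently, by the argument just given, $Y$ --- has exactly two facets, then every edge and ridge lies below both facets in each of $X$ and $Y$, so any bijection between the two pairs of facets extends $\varphi$ and the lemma holds outright.

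It remains to check that $\bar\varphi$ is a poset isomorphism. Incidences among edges and ridges are preserved because $\varphi$ already is an isomorphism of $\mathscr{F}_1^2$. For a facet $F$ and an edge $E$ (the ridge case is identical), $E\subseteq F\iff E\in\mathscr{F}^1(F)\subseteq\mathscr{F}_1^2(F)\iff\varphi(E)\in\varphi(\mathscr{F}_1^2(F))=\mathscr{F}_1^2(\bar\varphi(F))\iff\varphi(E)\subseteq\bar\varphi(F)$, and the same reasoning with $\varphi^{-1}$ shows $\bar\varphi^{-1}$ is order-preserving. As $\bar\varphi$ is a dimension-preserving bijection, it is the required isomorphism $\mathscr{F}_1^3(X)\to\mathscr{F}_1^3(Y)$.

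The main obstacle is the well-definedness step of the second paragraph: a facet is \emph{not} in general recoverable from an arbitrary subcollection of edges and ridges, so one must pin down exactly when the boundary sphere $|\partial F|$ determines $F$. The separation argument above --- essentially the same Alexander-duality computation used to prove Lemma~\ref{facet} --- settles this, at the price of isolating the two-facet $3$-sphere as a genuine exceptional case to be handled by hand.
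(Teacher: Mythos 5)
Your proof is correct and follows the same basic plan as the paper's: read each facet $F$ of $X$ from its $(1,2)$-skeleton via Lemma~\ref{facet}, push through $\varphi$ to find a facet $F'$ of $Y$, and check this gives a poset isomorphism. The one place you diverge is that you pause to justify uniqueness of $F'$ --- the paper simply asserts ``the unique facet $F'$'' without comment --- and you isolate a ``two-facet $3$-sphere'' as a possible exception to be handled separately. That extra care is reasonable given the paper's terseness, but the exceptional case you carve out is in fact vacuous: if a strongly regular CW $3$-sphere had two distinct facets $G_1\neq G_2$ with $\partial G_1=\partial G_2$, then $G_1\cap G_2$ would contain the $2$-sphere $\partial G_1$, hence could not be a (possibly empty) ball, contradicting strong regularity. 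So uniqueness of $F'$ follows in one line from strong regularity, with no Alexander-duality separation argument and no case split needed. It would tighten your write-up considerably to replace the second paragraph with this observation; everything else in your argument is sound.
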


\begin{proof}
    Let $\mathscr{E} \subseteq \mathscr{F}^1(X)$, $\mathscr{R} \subseteq \mathscr{F}^2(X)$. We can see that $\mathscr{E}, \mathscr{R}$ satisfy the conditions of Lemma \ref{facet} if and only if $\varphi(\mathscr{E}) \subseteq \mathscr{F}^1(Y), \varphi(\mathscr{R}) \subseteq \mathscr{F}^2(Y)$ satisfy those conditions. Thus, for each facet $F$ of $X$, we may define $\varphi(F):=F'$, where $F'$ is the unique facet of $Y$ such that
    \[
        \varphi\left(\mathscr{F}_1^2(F)\right) = \mathscr{F}_1^2(F').
    \]
    This gives us the desired isomorphism $\varphi:\mathscr{F}_1^3(X) \to \mathscr{F}_1^3(Y)$.
\end{proof}

Our theorem follows from Lemma \ref{3-sphere} by duality.

\begin{theorem}
    \label{edge_ridge}
    Let $P, Q$ be 4-polytopes and $\varphi:\mathscr{F}_1^2(P) \to \mathscr{F}_1^2(Q)$ an isomorphism. Then $\varphi$ extends to an isomorphism $\mathscr{F}(P) \to \mathscr{F}(Q)$.
\end{theorem}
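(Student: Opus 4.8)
The plan is to deduce the theorem from Lemma \ref{3-sphere} by applying it twice: once to $\partial P$ and once to the boundary of the polar dual $P^*$. Recall that the boundary complex $\partial P$ of a $4$-polytope $P$ is a strongly regular CW $3$-sphere, and that $\mathscr{F}_a^b(\partial P)$ is canonically identified with $\mathscr{F}_a^b(P)$ for $0 \le a \le b \le 3$. Feeding the given isomorphism $\varphi : \mathscr{F}_1^2(P) \to \mathscr{F}_1^2(Q)$ into Lemma \ref{3-sphere} immediately yields an extension to an isomorphism $\varphi_1 : \mathscr{F}_1^3(P) \to \mathscr{F}_1^3(Q)$; in other words, the edge--ridge data already pins down the facets of $P$ together with all incidences among edges, ridges, and facets.

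To recover the vertices I would pass to the dual. The face poset $\mathscr{F}(P^*)$ is order-reverse-isomorphic to $\mathscr{F}(P)$, so $\mathscr{F}^k(P^*)$ corresponds to $\mathscr{F}^{3-k}(P)$; hence $\mathscr{F}_1^2(P^*)$ corresponds to $\mathscr{F}^2(P)\cup\mathscr{F}^1(P) = \mathscr{F}_1^2(P)$. Composing $\varphi$ with the two order-reversing identifications gives an order-preserving map, so $\varphi$ induces an isomorphism $\varphi^* : \mathscr{F}_1^2(P^*) \to \mathscr{F}_1^2(Q^*)$. Applying Lemma \ref{3-sphere} to the CW $3$-spheres $\partial P^*$ and $\partial Q^*$ extends $\varphi^*$ to $\mathscr{F}_1^3(P^*) \to \mathscr{F}_1^3(Q^*)$, which translates back to an isomorphism $\varphi_2 : \mathscr{F}_0^2(P) \to \mathscr{F}_0^2(Q)$ extending $\varphi$. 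Thus $\varphi$ also determines the vertices of $P$ and all incidences among vertices, edges, and ridges.

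Finally I would glue $\varphi_1$ and $\varphi_2$. Since the facet produced in Lemma \ref{facet} is unique, the extension in Lemma \ref{3-sphere} is canonical, so $\varphi_1$ and $\varphi_2$ both restrict to $\varphi$ on $\mathscr{F}_1^2$; as they introduce disjoint new classes of faces (facets, respectively vertices), their union is a well-defined bijection $\mathscr{F}_0^3(P) \to \mathscr{F}_0^3(Q)$, and together with $\emptyset \mapsto \emptyset$ and $P \mapsto Q$ this is a bijection $\mathscr{F}(P) \to \mathscr{F}(Q)$. To check it is a poset isomorphism it suffices to verify it preserves the covering relations vertex $\subset$ edge, edge $\subset$ ridge, and ridge $\subset$ facet, since the full containment order is their transitive closure; the first two are preserved by $\varphi_2$ and the third by $\varphi_1$. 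The only point needing a word is that no longer-range incidence is lost in the gaps—for instance, a vertex lies in a facet precisely when some edge through it lies in that facet—but this is immediate once the covers are matched.

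The substantive content of the argument lies entirely in Lemma \ref{facet} (the Alexander-duality ``non-separating'' criterion) and Lemma \ref{3-sphere}. Here the only thing to be careful about is the bookkeeping of the two dual applications and confirming that edge--ridge incidences, doubled via polarity, genuinely exhaust the whole face poset of a $4$-polytope; I do not anticipate any real obstacle beyond that.
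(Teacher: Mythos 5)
Your proof is correct. The paper itself settles the matter in a single line---``Our theorem follows from Lemma~\ref{3-sphere} by duality''---which most naturally reads as: apply Lemma~\ref{3-sphere} to $\partial P,\partial Q$ to get an isomorphism $\mathscr{F}_1^3(P)\to\mathscr{F}_1^3(Q)$, then invoke the dual half of Gr\"unbaum's Theorem~\ref{(d-2)-skeleton}, which for $d=4$ says precisely that a $\mathscr{F}_1^3$-isomorphism extends to the whole face lattice. You avoid Theorem~\ref{(d-2)-skeleton} by applying Lemma~\ref{3-sphere} a second time, to the polar duals, recovering an isomorphism $\mathscr{F}_0^2(P)\to\mathscr{F}_0^2(Q)$, and then gluing the two extensions. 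The gluing is sound: the face lattice of a $4$-polytope is graded by dimension and its order is the transitive closure of the cover relations, so a bijection that preserves covers rank by rank (and whose inverse does too, which holds because your $\varphi_1$ and $\varphi_2$ are poset isomorphisms) is automatically a lattice isomorphism; your ``longer-range incidence'' worry is indeed discharged by chasing a saturated chain through each rank. So both proofs rest on the same substantive ingredients---Lemma~\ref{facet} via Alexander duality, packaged in Lemma~\ref{3-sphere}---and differ only in the endgame: you spend a second dual application of the lemma plus a gluing check where the paper makes a one-line appeal to Gr\"unbaum's theorem. Your route is slightly longer but self-contained relative to Theorem~\ref{(d-2)-skeleton}.
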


\section{The \texorpdfstring{$(d-3)$}{(d-3)}-skeleton and dual \texorpdfstring{$(d-3)$}{(d-3)}-skeleton are not enough}
\label{s:wedges}

In this section, we will show that a $d$-polytope cannot generally be identified from the combined information of its $(d-3)$-skeleton and dual $(d-3)$-skeleton. For each $d \geq 3$, we will construct combinatorially distinct $d$-polytopes $P,Q$ whose $(d-3)$-skeleta are isomorphic and whose dual $(d-3)$-skeleta are also isomorphic.

Fix $d \geq 3$, and let $e_1, \ldots, e_d$ be the standard basis vectors of $\mathds{R}^d$. Consider the $d$-polytope
\[
    W := \operatorname{conv} \left\{ \pm e_1, \ldots, \pm e_{d-1}, 2e_d - e_1, e_d \pm e_2, \ldots, e_d \pm e_{d-1} \right\}.
\]
Here, $W$ is a wedge of two $(d-1)$-dimensional cross-polytopes $F$ and $F'$ sharing a single vertex at $e_1$. Specifically, $F$ is the $(d-1)$-dimensional cross-polytope with vertices $\{\pm e_1, \ldots, \pm e_{d-1}\}$, and $F'$ is the $(d-1)$-dimensional cross-polytope with vertices $\{e_1, 2e_d - e_1, e_d \pm e_2, \ldots, e_d \pm e_{d-1}\}$. The proper, nonempty faces of $W$ are as follows:
\begin{itemize}
    \item facets $F$ and $F'$,
    \item the vertex at $e_1$,
    \item for each $k$-face $G$ of $F$, $0 \leq k \leq d-2$, excluding the vertex at $e_1$:
    \begin{itemize}
        \item $G$ itself,
        \item a corresponding $k$-face $G'$ of $F'$, directly ``above" $G$ in the $e_d$ direction,
        \item a ``lateral" $(k+1)$-face equal to the convex hull of $G$ and $G'$.
    \end{itemize}
\end{itemize}

We construct new $d$-polytopes $P$ and $Q$ from $W$, each by shifting a vertex of $F'$ in the $-e_d$ direction. To construct $P$, we move the vertex at $2e_d - e_1$ to $e_d - e_1$. To construct $Q$, we leave the vertex at $2e_d - e_1$ put, and instead move the vertex at $e_d + e_2$ to $\frac{1}{2}e_d + e_2$. So
\begin{align*}
    P &:= \operatorname{conv} \{\pm e_1, \ldots, \pm e_{d-1}, e_d - e_1, e_d \pm e_2, \ldots, e_d \pm e_{d-1} \},\\
    Q &:= \operatorname{conv} \left\{\pm e_1, \ldots, \pm e_{d-1}, 2e_d - e_1, \frac{1}{2} e_d + e_2, e_d - e_2, e_d \pm e_3, \ldots, e_d \pm e_{d-1} \right\}.
\end{align*}

When we construct $P$ from $W$, we introduce a new ridge with vertices $\{e_d \pm e_2, \ldots, e_d \pm e_{d-1}\}$, subdividing $F'$ into two pyramidal facets. One of these pyramidal facets contains the vertex at $e_1$; the other contains the vertex at $e_d - e_1$. All faces of $W$ besides $F'$ remain intact.

Similarly, when we construct $Q$ from $W$, we introduce a new ridge with vertices $\{e_1, 2e_d - e_1, e_d \pm e_3, \ldots, e_d \pm e_{d-1}\}$, again subdividing $F'$ into two pyramidal facets. One of these pyramidal facets contains the vertex at $\frac{1}{2}e_d + e_2$; the other contains the vertex at $e_d - e_2$. As before, all faces of $W$ besides $F'$ remain intact. See Figures \ref{fig0}-\ref{fig1} for a comparison of $P$ and $Q$ when $d=3,4$.

\begin{claim}
    $\mathscr{F}(P)$ is not isomorphic to $\mathscr{F}(Q)$.
\end{claim}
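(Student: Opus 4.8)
The plan is to find a combinatorial invariant that distinguishes $\mathscr{F}(P)$ from $\mathscr{F}(Q)$, and the natural candidate is the pair of facets into which $F'$ was subdivided. In both $P$ and $Q$, the wedge $W$ contributes a facet $F$ (a $(d-1)$-cross-polytope) and a collection of lateral faces, while $F'$ is replaced by two pyramids glued along a new ridge. In $P$, the new ridge is a $(d-2)$-cross-polytope $\{e_d \pm e_2, \ldots, e_d \pm e_{d-1}\}$, so each of the two pyramids over it is a pyramid with a cross-polytope base. In $Q$, the new ridge is itself a pyramid — it has vertices $\{e_1, 2e_d - e_1, e_d \pm e_3, \ldots, e_d \pm e_{d-1}\}$, which is a pyramid with apex $e_1$ over the $(d-3)$-cross-polytope $\{2e_d - e_1, e_d \pm e_3, \ldots, e_d \pm e_{d-1}\}$ — so the two facets subdividing $F'$ in $Q$ are pyramids over a pyramid, i.e. twofold pyramids over a $(d-3)$-cross-polytope. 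These are combinatorially different polytopes, since a $(d-1)$-simplex or iterated pyramid has a different face-count vector (and different vertex degrees in its graph) from a pyramid over a $(d-2)$-cross-polytope once $d$ is large enough.

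Concretely, I would argue as follows. First, identify the facets of $P$ and of $Q$ explicitly: both share the facet $F$ and the lateral facets of $W$ coming from the $(d-2)$-faces of $F$ other than the vertex $e_1$; they differ only in the two facets replacing $F'$. Call these $F_1, F_2$ for $P$ and $F_1', F_2'$ for $Q$. Second, observe that any isomorphism $\Phi : \mathscr{F}(P) \to \mathscr{F}(Q)$ would restrict to a bijection on facets preserving all incidences, hence would induce isomorphisms between corresponding facet face-posets. Third, compute enough of the $f$-vectors (or another facet-level invariant, e.g. the multiset of vertex degrees within each facet, or the number of facets that are simplices) to show no such facet bijection can exist. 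In particular, in $P$ the two facets $F_1, F_2$ are each pyramids over a $(d-2)$-cross-polytope, so each has $1 + 2(d-2) = 2d-3$ vertices and is not a simplex for $d \geq 3$; in $Q$ the facets $F_1', F_2'$ are each iterated (twofold) pyramids over a $(d-3)$-cross-polytope, with $2 + 2(d-3) = 2d-4$ vertices. Since $2d-3 \neq 2d-4$, the two facet-sets cannot be matched, so no isomorphism $\Phi$ exists. (One should also double-check that the "shared" facets $F$ and the laterals are genuinely unchanged — this follows from the description in the text that all faces of $W$ other than $F'$ remain intact.)

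For the case $d=3$ the $f$-vector argument degenerates (cross-polytopes and simplices coincide in low dimension), so I would handle small $d$ by a direct inspection, or better, phrase the invariant uniformly: count the number of facets of the $d$-polytope that are \emph{not} pyramids. Every facet of $P$ other than $F$ is either a lateral face of $W$ (a pyramid, being the convex hull of a lower face and its translate — in fact a prism over a cross-polytope cell, which needs its own check) or one of $F_1, F_2$; the facet $F$ is a $(d-1)$-cross-polytope, which is not a pyramid for $d \geq 3$. In $Q$ the facet $F$ likewise appears. The cleanest discriminator is probably this: in $P$, the new ridge shared by $F_1$ and $F_2$ is a $(d-2)$-cross-polytope and hence is \emph{not} a pyramid (for $d \geq 4$), whereas in $Q$ the new ridge shared by $F_1'$ and $F_2'$ \emph{is} a pyramid; and a ridge's being or not being a pyramid is a poset-intrinsic property. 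So I would pin down the isomorphism type of the unique ridge lying in two pyramidal facets whose union is combinatorially $F'$, and show it differs between $P$ and $Q$.

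\textbf{Main obstacle.} The delicate point is making the comparison airtight across \emph{all} $d \geq 3$ with a single invariant, since for $d = 3$ cross-polytopes, simplices and squares all collapse together and the "pyramid vs.\ non-pyramid" distinction for ridges (which are then edges) is vacuous. I expect the $d=3$ (and possibly $d=4$) cases to need a separate, hands-on verification — e.g.\ reading off the $f$-vectors $f(P)$ and $f(Q)$ directly from Figures \ref{fig0}--\ref{fig1} and noting they already differ — while the uniform argument via the isomorphism type of the new ridge, or via the facet vertex-count $2d-3$ versus $2d-4$, handles $d \geq 4$ (respectively $d \geq 5$). A secondary point requiring care is confirming that the lateral facets of $W$ survive unchanged in both $P$ and $Q$ and are therefore irrelevant to the comparison; this is asserted in the construction but should be justified by noting that moving a vertex of $F'$ in the $-e_d$ direction, by a small enough amount, does not disturb the combinatorics of any face not containing that vertex, and keeps $F$ and the laterals as genuine faces.
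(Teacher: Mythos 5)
There is a genuine error in the proposal, and it is fatal to the whole approach: the new ridge in $Q$ is \emph{not} a pyramid. Look at the vertex set $\{e_1,\, 2e_d - e_1,\, e_d \pm e_3, \ldots, e_d \pm e_{d-1}\}$: the midpoint of $e_1$ and $2e_d - e_1$ is $e_d$, which is exactly the center of the cross-polytope formed by $\{e_d \pm e_3, \ldots, e_d \pm e_{d-1}\}$. So $e_1$ and $2e_d - e_1$ form an antipodal pair through the same center, and the new ridge of $Q$ is in fact a $(d-2)$-cross-polytope — combinatorially identical to the new ridge of $P$. Correspondingly, the two facets $B, B'$ of $Q$ replacing $F'$ are pyramids over a $(d-2)$-cross-polytope with $2d-3$ vertices each, not twofold pyramids with $2d-4$ vertices. (This is asserted explicitly in the paper's proof of the very next claim: ``Then $A,A'$ are pyramids over a common $(d-2)$-cross-polytope, as are $B,B'$.'') Your two proposed discriminators — facet vertex-count and ``is the new ridge a pyramid?'' — therefore give the same answer for $P$ and $Q$.

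More structurally, the approach could not have worked: any invariant you extract from individual facets, ridges, and their incidences lives inside the dual $(d-3)$-skeleton $\mathscr{F}_2^{d-1}$, and the whole point of the construction is that $\mathscr{F}_2^{d-1}(P) \cong \mathscr{F}_2^{d-1}(Q)$ (the third claim of the section). You need an invariant that mixes low- and high-dimensional faces. The paper uses vertex-facet incidences: the vertex $e_1$ of $Q$ lies in $2^{d-2}$ lateral facets, in $F$, and in \emph{both} of $B, B'$ (since both contain $e_1$), for a total of $2^{d-2}+3$ facets; whereas in $P$ the two subdividing facets $A, A'$ have disjoint apexes $e_1$ and $e_d - e_1$, so no vertex lies in more than $2^{d-2}+2$ facets. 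That count crosses the $(d-3)$/$2$ dimension gap and is what actually separates $P$ from $Q$.
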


\begin{proof}
    Consider the vertex of $Q$ at $e_1$. This vertex is contained in $2^{d-2}$ facets of $F$, meaning $e_1$ is contained in $2^{d-2}$ lateral facets of $Q$. Additionally, $e_1$ is contained in $F$ itself, as well as the pyramidal facets $\operatorname{conv}\{e_1, 2e_d - e_1, \frac{1}{2}e_d + e_2, e_d \pm e_3, \ldots, e_d \pm e_{d-1}\}$ and $\operatorname{conv}\{e_1, 2e_d - e_1, e_d - e_2, e_d \pm e_3, \ldots, e_d \pm e_{d-1}\}$. Thus, $e_1$ belongs to a total of $2^{d-2} + 3$ facets of $Q$.

    We will show that no vertex of $P$ belongs to this many facets. Let $v$ be a vertex of $P$ contained in $F$. Then $v$ belongs to $2^{d-2}$ facets of $F$, so $v$ is contained in $2^{d-2}$ lateral facets of $P$. Furthermore, if $v \neq e_1$, then the vertex $v + e_d$ belongs to the same $2^{d-2}$ lateral facets of $P$. Thus, each vertex of $P$ is contained in exactly $2^{d-2}$ lateral facets.

    There are only three non-lateral facets of $P$: the cross-polytope $F$ and the pyramids $\operatorname{conv}\{e_1, e_d \pm e_2, \ldots, e_d \pm e_{d-1}\}$ and $\operatorname{conv}\{e_d - e_1, e_d \pm e_2, \ldots, e_d \pm e_{d-1}\}$. There is no vertex shared by all three non-lateral facets, so each vertex of $P$ belongs to at most $2^{d-2} + 2$ facets in total.

    Since $Q$ has a vertex contained in $2^{d-2} + 3$ facets, and $P$ does not, we may conclude that $P$ and $Q$ are combinatorially distinct.
\end{proof}

\begin{claim}
    $\mathscr{F}_0^{d-3}(P)$ is isomorphic to $\mathscr{F}_0^{d-3}(Q)$.
\end{claim}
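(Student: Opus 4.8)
The plan is to show directly that the modifications turning $W$ into $P$ and into $Q$ do not touch any face of dimension at most $d-3$, so that both $\mathscr{F}_0^{d-3}(P)$ and $\mathscr{F}_0^{d-3}(Q)$ coincide with $\mathscr{F}_0^{d-3}(W)$. First I would recall from the construction that the only face of $W$ that is destroyed when passing to $P$ (resp.\ $Q$) is the facet $F'$, which has dimension $d-1$; every other proper nonempty face of $W$ survives verbatim as a face of $P$ (resp.\ of $Q$). Since $F'$ is a facet, and the new ridge introduced in each case has dimension $d-2$, the symmetric difference between $\mathscr{F}(W)$ and $\mathscr{F}(P)$ (and likewise $\mathscr{F}(Q)$) consists entirely of faces of dimension $d-2$ and $d-1$. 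Consequently $\mathscr{F}_0^{d-3}(P) = \mathscr{F}_0^{d-3}(W) = \mathscr{F}_0^{d-3}(Q)$ as subposets, giving not merely an abstract isomorphism but literal equality.

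To make this rigorous I would argue as follows. Fix $k$ with $0 \le k \le d-3$ and let $G$ be a $k$-face of $P$. If $G$ is not contained in the facet $\operatorname{conv}\{e_1, e_d\pm e_2,\dots,e_d\pm e_{d-1}\}$ or $\operatorname{conv}\{e_d-e_1,e_d\pm e_2,\dots,e_d\pm e_{d-1}\}$ (the two new pyramids), then $G$ lies in a facet of $P$ that is also a facet of $W$, hence $G$ is a face of $W$; and since $\dim G \le d-3 < d-1$, $G \ne F'$, so $G$ is also a face of $Q$. If instead $G$ is contained in one of the two new pyramids, note that such a pyramid has vertex set contained in the vertex set of $F'$ together with one extra vertex of $P$ lying in the relative interior of (the old) $F'$; but a $k$-face with $k \le d-3$ of this pyramid that uses the apex would correspond to a face of $F'$ of dimension $\le d-4 < d-2$, hence a face of the boundary $\partial F'$, which is a face of $W$ unaffected by the modification — and any $k$-face of the pyramid not using the apex is a face of the base, which is the new ridge's boundary, again a face of $W$. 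In all cases $G$ is a common face of $W$, $P$, and $Q$; the reverse inclusions are symmetric. This shows $\mathscr{F}^k(P) = \mathscr{F}^k(W) = \mathscr{F}^k(Q)$ for every $k \le d-3$, and since the partial orders are all induced from $\mathscr{F}(W)$, the posets $\mathscr{F}_0^{d-3}(P)$ and $\mathscr{F}_0^{d-3}(Q)$ are equal, in particular isomorphic.

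The main obstacle, and the only point requiring genuine care, is the handling of the faces of the two new pyramidal facets: one must verify that a low-dimensional face of such a pyramid, whether or not it contains the new apex vertex, is already a face of $W$ — essentially because collapsing the apex back to $e_1$ (in the case of $P$) identifies that face with a face of the cross-polytope $F'$ of strictly smaller dimension, which therefore lies in $\partial F'$ and survives the subdivision. A clean way to package this is to observe that the boundary complexes of $P$ and $Q$ are each obtained from that of $W$ by a single stellar-type subdivision of the facet $F'$, an operation which by definition alters only faces of dimension $\ge d-2$; I would state this explicitly and cite the face list of $W$ given above as justification that no lower-dimensional face is affected.
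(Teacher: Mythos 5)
Your proposal takes the same approach as the paper, which is simply to observe that passing from $W$ to $P$ (resp.\ to $Q$) destroys only the facet $F'$, introduces only one new $(d-2)$-face, and replaces $F'$ by two new facets, hence leaves the entire $(d-3)$-skeleton unchanged. Your extra verification of this fact is fine in outline, but three details are off. First, the ``new'' apex ($e_d-e_1$ for $P$, or $\tfrac12 e_d + e_2$ for $Q$) is obtained by \emph{moving} an existing vertex of $W$ in the $-e_d$ direction, so it does \emph{not} lie in the relative interior of the old $F'$ --- indeed it does not even lie in the affine hull of $F'$. Because a vertex genuinely moves, there is no literal equality of face posets as sets of subsets of $\mathds{R}^d$; what one gets is a canonical isomorphism induced by the obvious vertex correspondence, which is all the claim requires, so ``literal equality'' overstates the conclusion. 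Second, the split of $F'$ into two pyramids over a common equatorial cross-polytope is not a stellar subdivision: a stellar subdivision inserts a new interior vertex and cones over all of $\partial F'$, creating new faces in \emph{every} dimension, which is precisely what we want to avoid here. None of these slips damages the core argument, but the ``stellar-type'' framing in particular would, if taken at face value, make the claim false, so it is worth removing.
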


\begin{proof}
    Recall that when we constructed each of $P,Q$ from $W$, we introduced one new ridge and subdivided the facet $F'$ into two pyramidal facets, but we preserved all faces of dimension $d-3$ or lower. Thus, $\mathscr{F}_0^{d-3}(P) \cong \mathscr{F}_0^{d-3}(Q) \cong \mathscr{F}_0^{d-3}(W)$.
\end{proof}

\begin{claim}
    $\mathscr{F}_2^{d-1}(P)$ is isomorphic to $\mathscr{F}_2^{d-1}(Q)$.
\end{claim}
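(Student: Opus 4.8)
The plan is to show that $P$ and $Q$ have isomorphic dual $(d-3)$-skeleta, i.e.\ isomorphic $(2,d-1)$-skeleta, by exhibiting explicit bijections between their faces of each dimension $2 \leq k \leq d-1$ and checking these respect incidence. The guiding observation is that both $P$ and $Q$ are obtained from the same wedge $W$ by subdividing the facet $F'$ with one new ridge, and this subdivision only affects faces \emph{contained in} $\overline{F'}$: every face of $W$ that is not a face of $F'$ survives unchanged in both $P$ and $Q$. So the only faces that need matching up are those lying in $\overline{F'}$, together with the two new ``half'' facets in each polytope.

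First I would record, for each of $W$, $P$, $Q$, exactly which faces of dimension $\geq 2$ lie in $\overline{F'}$. In $W$, the facet $F'$ is a $(d-1)$-cross-polytope, so its proper faces of dimension $j$ (for $0 \le j \le d-2$) correspond to the faces of the cross-polytope; the facet $F'$ itself is the unique $(d-1)$-face in $\overline{F'}$. In $P$, the facet $F'$ is replaced by two pyramids $F'_1 = \operatorname{conv}\{e_1, e_d \pm e_2,\dots\}$ and $F'_2 = \operatorname{conv}\{e_d - e_1, e_d\pm e_2,\dots\}$ over a common base ridge $R_P$ with vertex set $\{e_d \pm e_2,\dots,e_d\pm e_{d-1}\}$; similarly in $Q$ we get two pyramids $F'_1{}^{\!Q}, F'_2{}^{\!Q}$ over a common base ridge $R_Q$ with vertex set $\{e_1, 2e_d - e_1, e_d \pm e_3,\dots,e_d\pm e_{d-1}\}$. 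The key combinatorial fact is that $R_P$ and $R_Q$ are both $(d-2)$-dimensional cross-polytopes (on $d-2$ antipodal pairs), and cutting the $(d-1)$-cross-polytope $F'$ by either of them produces, up to combinatorial equivalence, the \emph{same} subdivision: a cross-polytope split into two pyramids over a codimension-one cross-polytopal section. Hence there is a combinatorial isomorphism between the subdivided-$F'$ portion of $P$ and that of $Q$ — indeed $F'$ is vertex-transitive, and any two ``equatorial'' cross-polytope sections are equivalent under a symmetry of $F'$.

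Concretely, I would build the isomorphism $\psi:\mathscr{F}_2^{d-1}(P)\to\mathscr{F}_2^{d-1}(Q)$ in two pieces. On all faces of dimension $\geq 2$ not contained in $\overline{F'}$ — which are literally the same faces of $W$ in both polytopes — let $\psi$ be the identity (via the canonical identification $\mathscr{F}(P)\setminus\{\text{faces in }\overline{F'}\} = \mathscr{F}(W)\setminus\{\text{faces of }F'\} = \mathscr{F}(Q)\setminus\{\text{faces in }\overline{F'}\}$). On the faces contained in $\overline{F'}$, use the combinatorial equivalence of the two subdivided cross-polytopes described above to send faces of $P$ in $\overline{F'}$ to faces of $Q$ in $\overline{F'}$ dimension by dimension. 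One then checks the two pieces are compatible, i.e.\ that incidences \emph{between} a face in $\overline{F'}$ and a face outside $\overline{F'}$ are preserved: this holds because such a containment $G \subseteq H$ with $G \subseteq \overline{F'}$, $H \not\subseteq \overline{F'}$ forces $G \subseteq \partial F' = \partial F'_1 \cap \cdots$, and on $\partial F'$ — which is untouched by the subdivision — the chosen equivalence can be taken to restrict to the identity. So the compatibility is automatic once the cross-polytope-section equivalence is normalized to fix the boundary.

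The main obstacle is making precise and verifying this last normalization: I must choose the combinatorial equivalence between ``$F'$ cut by $R_P$'' and ``$F'$ cut by $R_Q$'' so that it is the identity on $\partial F'$ (equivalently, so that the automorphism of the cross-polytope $F'$ carrying the equatorial section $R_P$ to $R_Q$ is compatible with how both sit over the shared boundary). Since $F'$ is a cross-polytope, its combinatorial automorphism group is the hyperoctahedral group acting on the $d-1$ antipodal pairs; $R_P$ uses the pairs $\{\pm e_2,\dots,\pm e_{d-1}\}$ (dropping the $e_1/2e_d-e_1$ pair) while $R_Q$ uses the pairs obtained by dropping the $e_2$-pair — so a single transposition of two antipodal pairs carries one section to the other, but this transposition is \emph{not} the identity on $\partial F'$. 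The fix is that we do not need the global identity: we only need $\psi$ to be a well-defined poset isomorphism, and the faces outside $\overline{F'}$ attach to $\partial F'$ of $W$ in a way that is itself invariant under $\operatorname{Aut}(F')$ acting on $W$ (because $W$ is the wedge and the ``lateral'' structure over $F'$ mirrors that over $F$). So I would instead apply the ambient automorphism of $W$ realizing this transposition to \emph{both} the subdivided part and the lateral/outside part simultaneously when defining $\psi$, at the cost of $\psi$ no longer being the identity outside $\overline{F'}$ but still being a legitimate isomorphism. Checking that this globally-twisted map is consistent — in particular that it agrees with the isomorphism of $(d-3)$-skeleta from the previous claim on the overlap — is the step that needs care, but it is bookkeeping rather than a genuine difficulty.
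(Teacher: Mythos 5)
The overall structure you describe — cross-polytope-section equivalence inside $\overline{F'}$, glued to a compatible map on the rest of the polytope — is the right idea and parallels the paper's proof, which exhibits an isomorphism $\psi:\mathscr{F}(A)\cup\mathscr{F}(A')\to\mathscr{F}(B)\cup\mathscr{F}(B')$ of the two subdivided-$F'$ pieces and then extends it by explicitly constructing the rest of $\mathscr{F}_2^{d-1}$ via lateral faces, ``opposite'' faces, and the facet $F$. But your justification for the gluing step has a genuine gap.

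You write that because ``the lateral structure over $F'$ mirrors that over $F$,'' the transposition carrying $R_P$ to $R_Q$ is realized by an \emph{ambient automorphism of $W$}. That automorphism does not exist. The vertex $e_1$ is the unique vertex of $W$ lying in both non-lateral facets $F$ and $F'$ (it lies in $2^{d-2}+2$ facets, strictly more than any other vertex), so every automorphism of $\mathscr{F}(W)$ fixes $e_1$. Your transposition moves $e_1$. Moreover the mirroring you invoke is genuinely broken at $e_1$: the lateral $(k+1)$-face over a $k$-face $G$ of $F$ is a prism if $e_1\notin G$, but a ``pinched'' prism (a pyramid over a prism) if $e_1\in G$; an automorphism of $F'$ moving $e_1$ scrambles these two types. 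So the claimed $\operatorname{Aut}(F')$-invariance of the attachment is false for $\mathscr{F}(W)$.

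What \emph{is} true — and what your argument actually needs — is that the transposition extends to an automorphism of the $(2,d-1)$-skeleton $\mathscr{F}_2^{d-1}(W)$. This works precisely because $\mathscr{F}_2^{d-1}$ has forgotten the vertices (and, for $d\geq 4$, the edges): the poset $\mathscr{F}_2^{d-1}(W)$ cannot see which faces contain $e_1$, so the prism/pinched-prism distinction disappears, and the map $G\mapsto H,\ G'\mapsto\sigma(G'),\ L(G)\mapsto L(H)$ (where $H' = \sigma(G')$) is a bona fide poset automorphism. This is the crux of the entire claim — it is exactly why $\mathscr{F}_2^{d-1}(P)\cong\mathscr{F}_2^{d-1}(Q)$ while $\mathscr{F}(P)\not\cong\mathscr{F}(Q)$ — and labeling it ``bookkeeping rather than a genuine difficulty'' understates it. The paper sidesteps the whole issue by never mentioning automorphisms of $W$: it starts from the pyramids $A,A'$ and reconstructs $\mathscr{F}_2^{d-1}(P)$ by three purely combinatorial steps, identical for $P$ and $Q$, so the compatibility is built in rather than asserted. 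Finally, your last sentence worries about agreement with the $(0,d-3)$-skeleton isomorphism from the previous claim — no such agreement is required; the two claims are independent.
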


\begin{proof}
    Let $A,A'$ be the two facets of $P$ resulting from our subdivision of $F'$:
    \begin{align*}
        A &:= \operatorname{conv}\{e_1, e_d \pm e_2, \ldots, e_d \pm e_{d-1}\},\\
        A' &:= \operatorname{conv}\{e_d - e_1, e_d \pm e_2, \ldots, e_d \pm e_{d-1}\}.
    \end{align*}
    Let $B,B'$ be the analogous facets of $Q$:
    \begin{align*}
        B &:= \operatorname{conv}\left\{e_1, 2e_d - e_1, \frac{1}{2}e_d + e_2, e_d \pm e_3, \ldots, e_d \pm e_{d-1}\right\},\\
        B' &:= \operatorname{conv}\{e_1, 2e_d - e_1, e_d - e_2, e_d \pm e_3, \ldots, e_d \pm e_{d-1}\}.
    \end{align*}
    Then $A,A'$ are pyramids over a common $(d-2)$-cross-polytope, as are $B,B'$. Thus, there is an isomorphism $\psi:\mathscr{F}(A) \cup \mathscr{F}(A') \to \mathscr{F}(B) \cup \mathscr{F}(B')$ such that $\psi(\partial(A \cup A')) = \partial(B \cup B')$.

    We may construct $\mathscr{F}_2^{d-1}(P)$ from $\mathscr{F}(A) \cup \mathscr{F}(A')$ in the following steps:
    \begin{enumerate}
        \item For each $k$-face $G \in \partial(A \cup A')$ with $k \geq 1$, add a ``lateral" $(k+1)$-face $L(G)$ containing $G$. Let $G,L(G) \subseteq L(G')$ if and only if $G \subseteq G'$.
        \item For each $k$-face $G \in \partial(A \cup A')$ with $k \geq 2$, add an ``opposite" $k$-face $O(G)$ contained in $L(G)$ and disjoint from $A \cup A'$. Let $O(G) \subseteq O(G'),L(G')$ if and only if $G \subseteq G'$.
        \item Add one facet $F$ containing exactly the faces $O(G)$ added in step 2.
    \end{enumerate}
    We may construct $\mathscr{F}_2^{d-1}(Q)$ from $\mathscr{F}(B) \cup \mathscr{F}(B')$ in the same steps, simply replacing $A \cup A'$ with $B \cup B'$ at each occurrence. It follows that $\mathscr{F}_2^{d-1}(P) \cong \mathscr{F}_2^{d-1}(Q)$.
\end{proof}

In conclusion,
\begin{theorem}
    \label{identical_skeleta}
    For all $d \geq 3$, there exist $d$-polytopes $P$ and $Q$ such that
    \begin{align*}
        \mathscr{F}_0^{d-3}(P) &\cong \mathscr{F}_0^{d-3}(Q),\\
        \mathscr{F}_2^{d-1}(P) &\cong \mathscr{F}_2^{d-1}(Q),\\
        \mathscr{F}(P) &\not\cong \mathscr{F}(Q).
    \end{align*}
\end{theorem}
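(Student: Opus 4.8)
The plan is to let $P$ and $Q$ be precisely the polytopes constructed above from the wedge $W$; the theorem is then the conjunction of the three claims just established, together with the observation that $\mathscr{F}_2^{d-1}$ is by definition the dual $(d-3)$-skeleton (we have $d-k-1=2$ exactly when $k=d-3$). The whole approach rests on one structural point: building either $P$ or $Q$ from $W$ alters nothing outside the single facet $F'$ — in each case we insert one new ridge into $F'$, cutting $F'$ into two pyramids over a common $(d-2)$-dimensional cross-polytope, while every face of $W$ not contained in $F'$, in particular every face of dimension at most $d-3$, is untouched.

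Two of the three steps follow quickly from this. For the $(d-3)$-skeleton: all faces created or destroyed in the subdivision have dimension $d-2$ or $d-1$, so $\mathscr{F}_0^{d-3}(P)\cong\mathscr{F}_0^{d-3}(W)\cong\mathscr{F}_0^{d-3}(Q)$. For the non-isomorphism, I would produce a combinatorial invariant distinguishing $P$ from $Q$, taking the maximum number of facets incident to a vertex. Each vertex of the base cross-polytope $F$ lies in $2^{d-2}$ of the ``lateral'' facets of the wedge, and one then counts the few non-lateral facets (the cross-polytope $F$ and the two pyramids from the cut): in $Q$ the cut ridge passes through the apex $e_1$ shared by $F$ and $F'$, so $e_1$ gains both new pyramids and lies in $2^{d-2}+3$ facets, whereas in $P$ the cut ridge avoids $e_1$ and no vertex reaches $2^{d-2}+3$.

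The step I expect to be the genuine work is the dual $(d-3)$-skeleton isomorphism $\mathscr{F}_2^{d-1}(P)\cong\mathscr{F}_2^{d-1}(Q)$. Rather than matching faces of $P$ against faces of $Q$ directly, I would describe a uniform recipe that reconstructs $\mathscr{F}_2^{d-1}$ of either polytope from the combinatorics of the relevant pair of pyramids alone — $\{A,A'\}$ on the $P$ side and $\{B,B'\}$ on the $Q$ side. Since $\{A,A'\}$ and $\{B,B'\}$ are both pyramids over a $(d-2)$-cross-polytope, there is an isomorphism $\psi\colon\mathscr{F}(A)\cup\mathscr{F}(A')\to\mathscr{F}(B)\cup\mathscr{F}(B')$ carrying $\partial(A\cup A')$ to $\partial(B\cup B')$ (both being just $\partial F'$). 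Then one reattaches, in a manner dictated entirely by $\psi$: a ``lateral'' $(k+1)$-face over each boundary $k$-face, an ``opposite'' $k$-face inside each such lateral, and finally the one leftover wedge facet $F$. Because every attachment step is prescribed identically on the two sides, the resulting posets are isomorphic. The part needing care is the incidence bookkeeping among the lateral faces, the opposite faces, and $\partial F'$, so that the recipe genuinely yields $\mathscr{F}_2^{d-1}$ rather than a larger or smaller poset; the low-dimensional cases $d=3,4$ serve as a check.
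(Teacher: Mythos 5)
Your proposal is correct and follows essentially the same route as the paper: the same wedge construction and vertex-shifting to produce $P$ and $Q$, the same facet-count invariant ($2^{d-2}+3$ versus at most $2^{d-2}+2$) for non-isomorphism, the same observation that the subdivision leaves all faces of dimension $\leq d-3$ untouched, and the same uniform three-step reattachment recipe (lateral faces, opposite faces, one final facet) to build $\mathscr{F}_2^{d-1}$ from the pyramid pair. The incidence bookkeeping you flag is exactly where the paper's proof of its third claim spends its effort, and it goes through as you expect.
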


Note that when $d=3$, polytopes satisfying Theorem \ref{identical_skeleta} are unremarkable. It simply means that $f_0(P) = f_0(Q)$ and $f_2(P) = f_2(Q)$, a property shared by many pairs of distinct 3-polytopes. Theorem \ref{identical_skeleta} is nontrivial for $d \geq 4$.

\begin{remark}
    When $d=4$, Theorem \ref{identical_skeleta} gives combinatorially distinct 4-polytopes $P$ and $Q$ with the same graph and the same dual graph.
\end{remark}

\begin{figure}
    \centering
    \tdplotsetmaincoords{65}{75}
    \begin{tikzpicture}[line cap=round, line join=round, tdplot_main_coords, scale=2.5, line width = 1pt, label distance=-4pt]
        \coordinate (e1) at (1,0,0);
        \coordinate (-e1) at (-1,0,0);
        \coordinate (e2) at (0,1,0);
        \coordinate (-e2) at (0,-1,0);
        \coordinate (-e1+) at (-1,0,2);
        \coordinate (e2+) at (0,1,1);
        \coordinate (-e2+) at (0,-1,1);

        \draw (e2) -- (e1) -- (-e2);
        \draw [dashed](e2) -- (-e1) -- (-e2);
        \draw (e1) -- (e2+) -- (-e1+) -- (-e2+) -- cycle;
        \draw [dashed] (-e1) -- (-e1+);
        \draw (e2) -- (e2+);
        \draw (-e2) -- (-e2+);
    \end{tikzpicture}
    \qquad
    \begin{tikzpicture}[line cap=round, line join=round, tdplot_main_coords, scale=2.5, line width = 1pt, label distance=-4pt]
        \coordinate (e1) at (1,0,0);
        \coordinate (-e1) at (-1,0,0);
        \coordinate (e2) at (0,1,0);
        \coordinate (-e2) at (0,-1,0);
        \coordinate (-e1+) at (-1,0,1);
        \coordinate (e2+) at (0,1,1);
        \coordinate (-e2+) at (0,-1,1);

        \draw (e2) -- (e1) -- (-e2);
        \draw [dashed] (e2) -- (-e1) -- (-e2);
        \draw (e1) -- (e2+) -- (-e1+) -- (-e2+) -- cycle;
        \draw [dashed] (-e1) -- (-e1+);
        \draw (e2) -- (e2+);
        \draw (-e2) -- (-e2+);

        \draw [line width=2pt, shorten <=1pt, shorten >=1pt] (e2+) -- (-e2+);
    \end{tikzpicture}
    \qquad
    \begin{tikzpicture}[line cap=round, line join=round, tdplot_main_coords, scale=2.5, line width = 1pt, label distance=-4pt]
        \coordinate (e1) at (1,0,0);
        \coordinate (-e1) at (-1,0,0);
        \coordinate (e2) at (0,1,0);
        \coordinate (-e2) at (0,-1,0);
        \coordinate (-e1+) at (-1,0,2);
        \coordinate (e2+) at (0,1,0.5);
        \coordinate (-e2+) at (0,-1,1);

        \draw (e2) -- (e1) -- (-e2);
        \draw [dashed] (e2) -- (-e1) -- (-e2);
        \draw (e1) -- (e2+) -- (-e1+) -- (-e2+) -- cycle;
        \draw [dashed] (-e1) -- (-e1+);
        \draw (e2) -- (e2+);
        \draw (-e2) -- (-e2+);

        \draw [line width=2pt, shorten <=1pt, shorten >=1pt] (e1) -- (-e1+);
    \end{tikzpicture}

    \caption{From left to right, polytopes $W$, $P$, and $Q$ for $d=3$. The square base is $F$; the bold edge is introduced when we construct $P$ or $Q$ from $W$.}

    \label{fig0}
\end{figure}
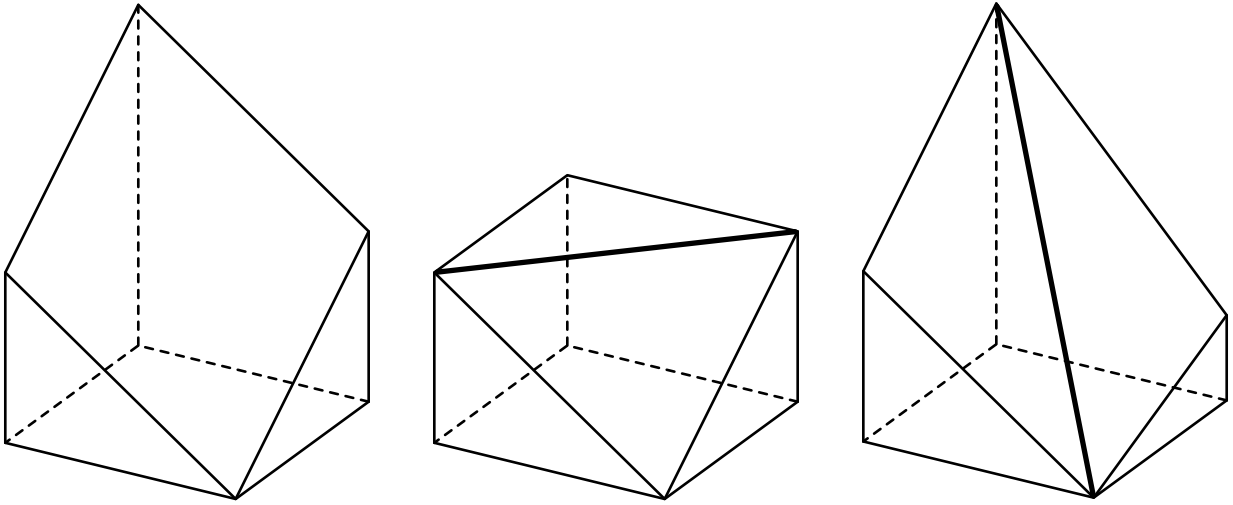

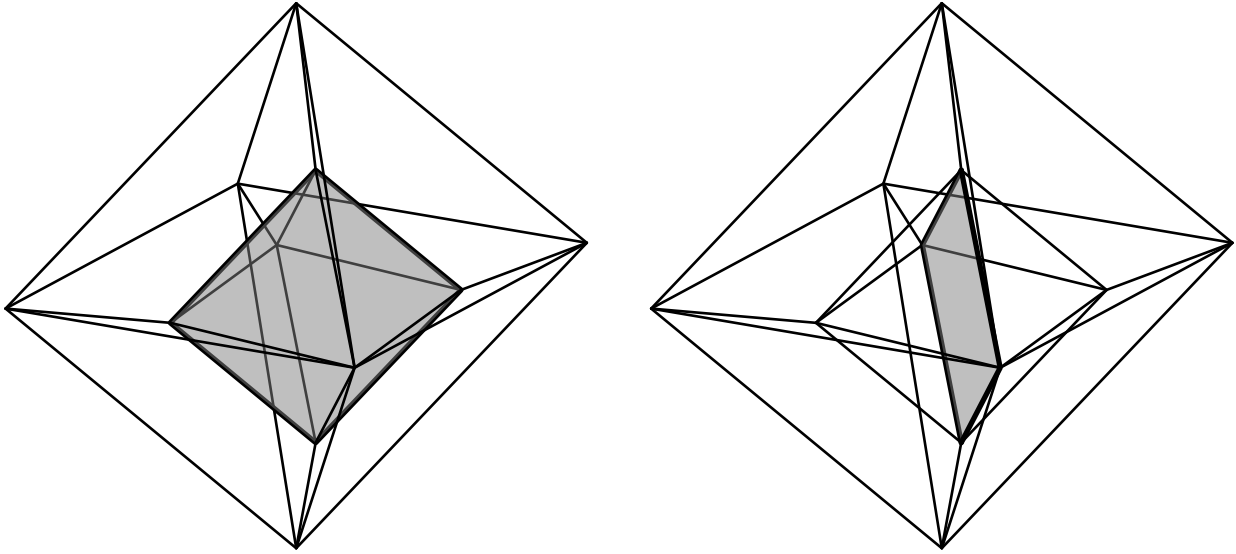
\begin{figure}
    \centering
    \tdplotsetmaincoords{65}{75}
    \begin{tikzpicture}[line cap=round, line join=round, tdplot_main_coords, scale=2.0, line width = 1pt, label distance=-4pt]
        \coordinate (e1) at (1, 0, 0);
        \coordinate (-e1) at (-1, 0, 0);
        \coordinate (e2) at (0, 1, 0);
        \coordinate (-e2) at (0, -1, 0);
        \coordinate (e3) at (0, 0, 1);
        \coordinate (-e3) at (0, 0, -1);

        \coordinate (-e1+) at (-2, 0, 0);
        \coordinate (e2+) at (-0.5, 2, 0);
        \coordinate (-e2+) at (-0.5, -2, 0);
        \coordinate (e3+) at (-0.5, 0, 2);
        \coordinate (-e3+) at (-0.5, 0, -2);

        %\draw (e1) -- (e2) -- (-e1) -- (-e2) -- cycle;
        %\draw (e1) -- (e3) -- (-e1) -- (-e3) -- cycle;
        \draw (e2) -- (-e1) -- (-e2);
        \draw (e3) -- (-e1) -- (-e3);
        \draw [line width=2pt] (e2) -- (e3) -- (-e2) -- (-e3) -- cycle;

        %\draw (e1) -- (e2+) -- (-e1+) -- (-e2+) -- cycle;
        %\draw (e1) -- (e3+) -- (-e1+) -- (-e3+) -- cycle;
        \draw (e2+) -- (-e1+) -- (-e2+);
        \draw (e3+) -- (-e1+) -- (-e3+);
        \draw (e2+) -- (e3+) -- (-e2+) -- (-e3+) -- cycle;

        \draw (-e1) -- (-e1+);
        \draw (e2) -- (e2+);
        \draw (-e2) -- (-e2+);
        \draw (e3) -- (e3+);
        \draw (-e3) -- (-e3+);

        \path[fill=gray, opacity=0.5] (e2) -- (e3) -- (-e2) -- (-e3) -- cycle;

        \draw (e2) -- (e1) -- (-e2);
        \draw (e3) -- (e1) -- (-e3);
        \draw (e2+) -- (e1) -- (-e2+);
        \draw (e3+) -- (e1) -- (-e3+);
    \end{tikzpicture}
    \qquad
    \begin{tikzpicture}[line cap=round, line join=round, tdplot_main_coords, scale=2.0, line width = 1pt, label distance=-4pt]
        \coordinate (e1) at (1, 0, 0);
        \coordinate (-e1) at (-1, 0, 0);
        \coordinate (e2) at (0, 1, 0);
        \coordinate (-e2) at (0, -1, 0);
        \coordinate (e3) at (0, 0, 1);
        \coordinate (-e3) at (0, 0, -1);

        \coordinate (-e1+) at (-2, 0, 0);
        \coordinate (e2+) at (-0.5, 2, 0);
        \coordinate (-e2+) at (-0.5, -2, 0);
        \coordinate (e3+) at (-0.5, 0, 2);
        \coordinate (-e3+) at (-0.5, 0, -2);

        %\draw (e1) -- (e2) -- (-e1) -- (-e2) -- cycle;
        %\draw (e1) -- (e3) -- (-e1) -- (-e3) -- cycle;
        \draw (e2) -- (-e1) -- (-e2);
        \draw [line width=2pt] (e3) -- (-e1) -- (-e3);
        \draw (e2) -- (e3) -- (-e2) -- (-e3) -- cycle;

        %\draw (e1) -- (e2+) -- (-e1+) -- (-e2+) -- cycle;
        %\draw (e1) -- (e3+) -- (-e1+) -- (-e3+) -- cycle;
        \draw (e2+) -- (-e1+) -- (-e2+);
        \draw (e3+) -- (-e1+) -- (-e3+);
        \draw (e2+) -- (e3+) -- (-e2+) -- (-e3+) -- cycle;

        \draw (-e1) -- (-e1+);
        \draw (e2) -- (e2+);
        \draw (-e2) -- (-e2+);
        \draw (e3) -- (e3+);
        \draw (-e3) -- (-e3+);

        \path[fill=gray, opacity=0.5] (e1) -- (e3) -- (-e1) -- (-e3) -- cycle;

        \draw (e2) -- (e1) -- (-e2);
        \draw [line width=2pt] (e3) -- (e1) -- (-e3);
        \draw (e2+) -- (e1) -- (-e2+);
        \draw (e3+) -- (e1) -- (-e3+);
    \end{tikzpicture}

    \caption{Schlegel diagrams of $P$ (left) and $Q$ (right) for $d=4$. The outer octahedron is $F$; the shaded square is the ridge introduced when we construct $P$ or $Q$ from $W$ (not pictured).}

    \label{fig1}
\end{figure}

\section{The \texorpdfstring{$(1,d-2)$}{(1,d-2)}-skeleton is necessary and sufficient}
\label{s:main-result}

We are now ready to answer Problem \ref{reconstruction_problem} for $d \geq 4$.

\begin{theorem}
    \label{reconstructing}
    Fix $d \geq 4$, and let
    \begin{gather*}
        0 \leq a_1 \leq b_1 < d,\\
        \vdots\\
        0 \leq a_n \leq b_n < d.
    \end{gather*}
    The following are equivalent:
    \begin{enumerate}[label=(\roman*)]
        \item For all $d$-polytopes $P$ and $Q$, if $\mathscr{F}_{a_i}^{b_i}(P) \cong \mathscr{F}_{a_i}^{b_i}(Q)$ for $i=1, \ldots, n$, then $\mathscr{F}(P) \cong \mathscr{F}(Q)$.
        \item $\{1, \ldots, d-2\} \subseteq \{a_i, \ldots, b_i\}$ for some $1 \leq i \leq n$.
    \end{enumerate}
\end{theorem}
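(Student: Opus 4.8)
The plan is to establish the two implications separately, with nearly all of the substantive work delegated to results already in hand.

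\emph{Proving (ii) $\Rightarrow$ (i).} Suppose $\{1,\ldots,d-2\} \subseteq \{a_i,\ldots,b_i\}$ for some $i$; since $\{a_i,\ldots,b_i\}$ is an interval of integers this forces $a_i \leq 1$ and $b_i \geq d-2$. I would first record the elementary fact that any poset isomorphism $\varphi:\mathscr{F}_{a}^{b}(P) \to \mathscr{F}_{a}^{b}(Q)$ preserves the dimension of each face: the face lattice of a polytope is graded by dimension, so within $\mathscr{F}_a^b(P)$ the dimension of a face $G$ is recovered as $a$ plus the length of a longest chain of faces contained in $G$, a quantity visibly preserved by $\varphi$. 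Hence $\varphi$ restricts to an isomorphism $\mathscr{F}_1^{d-2}(P) \to \mathscr{F}_1^{d-2}(Q)$. If $d=4$ this is an isomorphism $\mathscr{F}_1^2(P) \to \mathscr{F}_1^2(Q)$, which extends to $\mathscr{F}(P) \to \mathscr{F}(Q)$ by Theorem \ref{edge_ridge}; if $d \geq 5$ it extends by Theorem \ref{middle_interval}. Either way (i) holds.

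\emph{Proving (i) $\Rightarrow$ (ii), by contraposition.} Assume (ii) fails, so that for every $i$ we have $\{1,\ldots,d-2\} \not\subseteq \{a_i,\ldots,b_i\}$. Because $\{a_i,\ldots,b_i\}$ is an integer interval, this forces $a_i \geq 2$ or $b_i \leq d-3$, i.e.\ $\{a_i,\ldots,b_i\} \subseteq \{2,\ldots,d-1\}$ or $\{a_i,\ldots,b_i\} \subseteq \{0,\ldots,d-3\}$ for each $i$. I would then take the $d$-polytopes $P$ and $Q$ produced by Theorem \ref{identical_skeleta}, so that $\mathscr{F}_0^{d-3}(P) \cong \mathscr{F}_0^{d-3}(Q)$, $\mathscr{F}_2^{d-1}(P) \cong \mathscr{F}_2^{d-1}(Q)$, and $\mathscr{F}(P) \not\cong \mathscr{F}(Q)$. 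Fix $i$: if $\{a_i,\ldots,b_i\} \subseteq \{0,\ldots,d-3\}$, then $\mathscr{F}_{a_i}^{b_i}(P)$ is the induced subposet of $\mathscr{F}_0^{d-3}(P)$ on faces of dimensions $a_i,\ldots,b_i$, and the dimension-preserving isomorphism $\mathscr{F}_0^{d-3}(P)\cong\mathscr{F}_0^{d-3}(Q)$ restricts to $\mathscr{F}_{a_i}^{b_i}(P) \cong \mathscr{F}_{a_i}^{b_i}(Q)$; if instead $\{a_i,\ldots,b_i\} \subseteq \{2,\ldots,d-1\}$, restrict the isomorphism $\mathscr{F}_2^{d-1}(P)\cong\mathscr{F}_2^{d-1}(Q)$ in the same way. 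Thus $\mathscr{F}_{a_i}^{b_i}(P) \cong \mathscr{F}_{a_i}^{b_i}(Q)$ for every $i$ while $\mathscr{F}(P) \not\cong \mathscr{F}(Q)$, so (i) fails.

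\emph{On the difficulty.} There is essentially no remaining obstacle: the real content — Theorem \ref{edge_ridge} for $d=4$, Theorem \ref{middle_interval} for $d\geq 5$, and the counterexamples of Theorem \ref{identical_skeleta} — has already been established, and what is left is the bookkeeping above together with the one small observation that isomorphisms of $(a,b)$-skeleta preserve face dimension. The only point demanding a little care is that a \emph{single} pair $(P,Q)$ must defeat the entire list $(a_1,b_1),\ldots,(a_n,b_n)$ at once; this works precisely because the pair from Theorem \ref{identical_skeleta} matches both the $(d-3)$-skeleton and the dual $(d-3)$-skeleton simultaneously, and every interval occurring in a list that fails (ii) is contained in one of these two ranges.
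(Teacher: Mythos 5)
Your proposal is correct and takes essentially the same approach as the paper: (ii)~$\Rightarrow$~(i) via Theorems~\ref{edge_ridge} and~\ref{middle_interval}, and (i)~$\Rightarrow$~(ii) via the counterexample pair from Theorem~\ref{identical_skeleta}. The only differences are cosmetic — you phrase the second implication as a contraposition rather than directly, and you spell out the (implicitly used) fact that skeleton isomorphisms preserve face dimension.
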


\begin{proof}
    By Theorems \ref{middle_interval} and \ref{edge_ridge}, (ii) implies (i).

    Suppose (i) holds, and let $P,Q$ be $d$-polytopes as in Theorem \ref{identical_skeleta}. Then $\mathscr{F}(P) \not\cong \mathscr{F}(Q)$, so there exists $1 \leq i \leq n$ such that $\mathscr{F}_{a_i}^{b_i}(P) \not\cong \mathscr{F}_{a_i}^{b_i}(Q)$. We know $\mathscr{F}_0^{d-3}(P) \cong \mathscr{F}_0^{d-3}(Q)$ and $\mathscr{F}_2^{d-1}(P) \cong \mathscr{F}_2^{d-1}(Q)$, so
    \[
        \{a_i, \ldots, b_i\} \not \subseteq \{0, \ldots, d-3\}, \{2, \ldots, d-1\}.
    \]
    Thus, $\{1, \ldots, d-2\} \subseteq \{a_i, \ldots, b_i\}$. We may conclude that (i) implies (ii).
\end{proof}

If we wish to identify all $d$-polytopes from a set of $(a,b)$-skeleta, then knowing more than one skeleton does not help. Theorem \ref{reconstructing} shows this for $d \geq 4$: either we know the $(1,d-2)$-skeleton, which is sufficient on its own, or we do not have enough information. We can make analogous statements for $d=2,3$ (Observations \ref{d=2}-\ref{d=3}).

Of course, there are \emph{specific} $d$-polytopes which can be identified from a set of a $(a,b)$-skeleta, but not from any one of those skeleta alone. For the simplest example, let $d=3$ and $P$ be the square pyramid. Then we can identify $P$ from $\mathscr{F}^0(P)$ and $\mathscr{F}^2(P)$, as $P$ is the only 3-polytope with five vertices and five facets. However, we cannot identify $P$ from $\mathscr{F}^0(P)$ or $\mathscr{F}^2(P)$ alone, as $P$ is neither the only 3-polytope with five vertices nor the only 3-polytope with five facets.

We conclude this section with a still-open problem of Gr\"unbaum {\cite[pp.~234]{grunbaum03}}.

\begin{problem}[Gr\"unbaum]
    For $d \geq 5$, do there exist $d$-polytopes $P,Q$ with an isomorphism $\varphi:\mathscr{F}_{d-3}^{d-2}(P) \to \mathscr{F}_{d-3}^{d-2}(Q)$ that does not extend to an isomorphism $\mathscr{F}_{d-3}^{d-1}(P) \to \mathscr{F}_{d-3}^{d-1}(Q)$?
\end{problem}

\section{Simplicial complexes}
Our focus so far has been on reconstructing polytopes, but we can ask the same questions about any class of objects with graded face posets: CW spheres, CW manifolds, simplicial manifolds, pseudomanifolds, and so on. In this section, we prove positive and negative reconstruction results for homology manifolds and normal, simplicial pseudomanifolds.

Recall that when we discuss homology manifolds, the homologies in question have $\mathds{Z}_2$-coefficients.

\label{s:simplicial}
\begin{lemma}
\label{simplicial_one_down}
    Let $X,Y$ be normal, simplicial $(d-1)$-pseudomanifolds with $d \geq 4$, and let $2 \leq k \leq d-2$. Then any isomorphism $\varphi:\mathscr{F}_{k-1}^k(X) \to \mathscr{F}_{k-1}^k(Y)$ extends to an isomorphism $\mathscr{F}_{k-2}^k(X) \to \mathscr{F}_{k-2}^k(Y)$.
\end{lemma}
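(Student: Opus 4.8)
The plan is to recover each $(k-2)$-face of $X$ from the combinatorial data in $\mathscr{F}_{k-1}^k(X)$, so that $\varphi$ determines a matching on $(k-2)$-faces compatible with the incidence relations. The key observation is that a $(k-2)$-face $G$ of a normal pseudomanifold is exactly encoded by its link: $\operatorname{Lk}(G,X)$ is a normal $1$-pseudomanifold, i.e.\ a cycle (this uses $k-2 \le d-3$, which holds since $k \le d-2$, so normality applies and forces the link to be connected; being a $1$-pseudomanifold it is then a single cycle). The vertices of that cycle are the $(k-1)$-faces $G \cup v$ containing $G$, and its edges are the $k$-faces containing $G$; two $(k-1)$-faces $G \cup v$, $G \cup w$ are adjacent in the cycle precisely when $G \cup \{v,w\}$ is a $k$-face of $X$. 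So a $(k-2)$-face should correspond to a cycle in the incidence graph of $\mathscr{F}_{k-1}^k(X)$ that arises this way.

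First I would make precise which cycles in the $(k-1)$-face/$k$-face incidence graph come from $(k-2)$-faces. Given a $k$-face $A$, its $(k-1)$-faces are its $k+1$ facets; the $(k-2)$-faces of $A$ are the pairs of these facets, and each such pair lies in exactly one $k$-face of $A$ (namely $A$). Globally, a $(k-2)$-face $G$ is determined by the set $S_G$ of $(k-1)$-faces containing it together with the set $T_G$ of $k$-faces containing it; the incidence structure between $S_G$ and $T_G$ is a cycle as above. I would characterize $\varphi$-invariantly the subsets $(S,T)$ of $(k-1)$- and $k$-faces that occur as $(S_G,T_G)$: namely, $S$ and $T$ together form a cycle in the incidence graph, every edge in $T$ has both its endpoints in $S$, every vertex in $S$ has degree exactly two within $T$, and $\bigcap_{A\in S}A$ is a single $(k-2)$-face (this last condition is what pins down that the cycle is a genuine vertex-link rather than a union of several links). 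The containment relation — intersecting the faces in $S$ — can be phrased poset-theoretically once one knows, for $(k-1)$-faces $A,B$ sharing a common $k$-face $C$, that $A\cap B$ is the unique $(k-2)$-face below both; this follows because $A,B$ are distinct facets of $C$ and any two facets of a simplex meet in a codimension-two face.

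The second step is routine: since the list of "admissible" pairs $(S,T)$ is defined purely in terms of the incidence graph of $\mathscr{F}_{k-1}^k$, the isomorphism $\varphi$ carries admissible pairs of $X$ to admissible pairs of $Y$, hence induces a bijection $\bar\varphi$ on $(k-2)$-faces. One then checks $\bar\varphi$ is order-compatible: $G \subseteq A$ for a $(k-2)$-face $G$ and a $(k-1)$- or $k$-face $A$ iff $A \in S_G \cup T_G$, which is preserved; and $G \subseteq G'$ between two $(k-2)$-faces cannot occur, so there is nothing further. Assembling $\varphi$ on $\mathscr{F}_{k-1}^k$ with $\bar\varphi$ on $\mathscr{F}^{k-2}$ gives the desired isomorphism $\mathscr{F}_{k-2}^k(X)\to\mathscr{F}_{k-2}^k(Y)$.

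The main obstacle I anticipate is the "single $(k-2)$-face" clause — ensuring that the cycles we pick out correspond to individual $(k-2)$-faces and not to more complicated subcomplexes assembled from several links. One must verify that if a set $S$ of $(k-1)$-faces has the property that it forms a cycle with its connecting $k$-faces and every member has degree two, then $\bigcap S$ is a single $(k-2)$-face. This is where normality is essential: it guarantees each $(k-2)$-face link is a single cycle (not a disjoint union), so that a "minimal" such cycle must be exactly one link; I would argue that any admissible cycle contains a vertex-link cycle and then use the degree-two condition to force equality. A secondary technical point is confirming that the intersection operation $A\cap B$, and hence the containment $G\subseteq A$, is recoverable from the abstract poset isomorphism $\varphi$ on $\mathscr{F}_{k-1}^k$ alone — but this is immediate because $\varphi$ is a poset isomorphism, so it already preserves all order relations among the $(k-1)$- and $k$-faces, and the $(k-2)$-faces are being reconstructed, not assumed.
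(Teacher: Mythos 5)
Your proposal hinges on the claim that $\operatorname{Lk}(G,X)$ is a normal $1$-pseudomanifold (a cycle) when $G$ is a $(k-2)$-face, but this is a dimension error. If $G$ is a $(k-2)$-face of a $(d-1)$-pseudomanifold, then $\operatorname{Lk}(G,X)$ has dimension $(d-1)-(k-2)-1 = d-k$, and since $k \leq d-2$ this is at least $2$, never $1$. Concretely, take $d=4$, $k=2$, and $X$ the boundary of a $4$-simplex: the link of a vertex is the boundary of a $3$-simplex, a $2$-sphere, whose $1$-skeleton is $K_4$, not a cycle. In the bipartite incidence structure you describe (with $S_G$ the $(k-1)$-faces and $T_G$ the $k$-faces containing $G$), each $k$-face $A$ does have degree exactly $2$ in $S_G$, but a $(k-1)$-face $B \in S_G$ is contained in $f_0(\operatorname{Lk}(B,X))$ many $k$-faces of $T_G$ (since every $k$-face containing $B$ automatically contains $G \subset B$), and this count is generally much larger than $2$. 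So the proposed characterization of $(k-2)$-faces by ``cycles with every vertex of degree two'' simply does not describe the objects you need to recover, and the argument does not get off the ground.

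The paper avoids this problem by working with much smaller local configurations. It defines a \emph{triangular subset} of $\mathscr{F}_{k-1}^k(X)$ as a six-element set $\{A_1,A_2,A_3,B_1,B_2,B_3\}$ with $B_i \subset A_j$ for $i\neq j$, equivalently the open interval $(C,G)$ for a $(k-2)$-face $C$ and a $(k+1)$-face $G$ above it. Each triangular subset determines a $(k-2)$-face via $I(T)=A_1\cap A_2\cap A_3$, and the paper then takes the equivalence relation on triangular subsets generated by sharing two $(k-1)$-faces, showing that equivalence classes biject with $(k-2)$-faces. Normality enters only to show that any two triangular subsets with the same $I$-value are connected by a chain: this is a path argument inside $\operatorname{Lk}(C,X)$, using that its $2$-faces are strongly connected. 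You would need some analogous local-to-global mechanism; the global ``cycle'' picture you want just is not available in this dimension range.
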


\begin{proof}
    We will call a subset $T$ of $\mathscr{F}_{k-1}^k(X)$ \emph{triangular} if $T = \{A_1, A_2, A_3, B_1, B_2, B_3\}$ for some distinct $k$-faces $A_1, A_2, A_3$ and distinct $(k-1)$-faces $B_1, B_2, B_3$, and
    \begin{align*}
        B_1 &\subset A_2, A_3,\\
        B_2 &\subset A_1, A_3,\\
        B_3 &\subset A_1, A_2.
    \end{align*} 
    Let $\sim$ be the finest equivalence relation on triangular subsets of $\mathscr{F}_{k-1}^k(X)$ with the following property: if $T, T'$ are triangular subsets sharing two $(k-1)$-faces, then $T \sim T'$. For any triangular subset $T$, we denote by $[T]$ the equivalence class of $T$ under $\sim$.

    For all triangular subsets $T = \{A_1, A_2, A_3, B_1, B_2, B_3\}$ as above, define
    \[
        I(T) := A_1 \cap A_2 \cap A_3.
    \]
    Since $A_1, A_2, A_3$ are distinct, we can see that $A_1 \cap A_2 = B_3$ and $A_1 \cap A_3 = B_2$. Thus, $I(T) = B_2 \cap B_3$. Since $B_2$ and $B_3$ are $(k-1)$ simplices belonging to the $k$-simplex $A_1$, it follows that $I(T)$ is a $(k-2)$-face of $X$. Furthermore, if $B_2, B_3 \in T'$ for some other triangular subset $T'$, then $I(T') = B_2 \cap B_3 = I(T)$. As a result, for all triangular subsets $T, T'$ with $T \sim T'$, we have $I(T) = I(T')$. We may therefore define $I[T] := I(T)$ as a function on equivalence classes.
    
    Let $C$ be a $(k-2)$-face of $X$. We will prove that $C = I[T]$ for exactly one equivalence class $[T]$. Since $k \leq d-2$, we know $C \subseteq G$ for some $(k+1)$-face $G$ of $X$, and the closed interval $[C,G]$ in $\mathscr{F}(X)$ is a Boolean lattice of length three. Thus, the open interval $(C,G)$ in $\mathscr{F}(X)$ is a triangular subset, and $C = I[(C,G)]$.

    We have shown that $C = I[T]$ for some triangular subset $T$; it remains to prove that $[T]$ is unique. Suppose $C = I[T] = I[T']$ for some triangular subsets $T, T'$. Let $A \in T$ be a $k$-face and let $B_1, B_2 \in T$ be $(k-1)$-faces with $B_1, B_2 \subset A$. Let $G$ be a $(k+1)$-face of $X$ with $A \subset G$. As before, the open interval $(C,G)$ in $\mathscr{F}(X)$ is a triangular poset. Furthermore, $B_1, B_2 \in (C,G)$, so $(C,G) \sim T$. By the same reasoning, we can find a $(k+1)$-face $G'$ of $X$ such that $(C,G') \sim T'$.

    Since $X$ is a normal pseudomanifold, we know that $\operatorname{Lk}(C,X)$ is a normal pseudomanifold of dimension at least 2, and $G \backslash C, G' \backslash C$ are 2-faces of $\operatorname{Lk}(C,X)$. Thus, there is a sequence of 2-faces $G\backslash C = U_1, \ldots, U_n = G'\backslash C \subseteq \operatorname{Lk}(C,X)$ such that $U_m, U_{m+1}$ share an edge for each $1 \leq m < n$. For $1 \leq m \leq n$, let $U_m = G_m \backslash C$ with $G_m$ a $(k+1)$-face of $X$, so $G_1=G$ and $G_n = G'$. Then $(C,G_1), \ldots, (C,G_n)$ are triangular subsets, and
    \[
        (C,G) = (C,G_1) \sim \cdots \sim (C,G_n) = (C,G').
    \]
    Thus, $T \sim T'$. It follows that $C=I[T]$ for a unique equivalence class $[T]$ of triangular subsets.

    Define triangular subsets of $\mathscr{F}_{k-1}^k(Y)$, equivalence classes, and the function $I$ identically as for $X$. Then the isomorphism $\varphi:\mathscr{F}_{k-1}^k(X) \to \mathscr{F}_{k-1}^k(Y)$ takes triangular subsets to triangular subsets and preserves equivalence classes. Thus, we can extend $\varphi$ to an isomorphism $\mathscr{F}_{k-2}^k(X) \to \mathscr{F}_{k-2}^k(Y)$
    by letting
    \[
        \varphi(I[T]) := I[\varphi(T)]
    \]
    for all triangular subsets $T$ of $\mathscr{F}_{k-1}^k(X)$.    
\end{proof}

By applying Lemma \ref{simplicial_one_down} repeatedly, we can use the $(k-1,k)$-skeleton of a normal, simplicial pseudomanifold to reconstruct its entire $k$-skeleton.

\begin{lemma}
\label{simplicial_reconstruct_down}
    Let $X,Y$ be normal, simplicial $(d-1)$-pseudomanifolds with $d \geq 3$, and let $1 \leq k \leq d-2$. Then any isomorphism $\varphi:\mathscr{F}_{k-1}^k(X) \to \mathscr{F}_{k-1}^k(Y)$ extends to an isomorphism $\mathscr{F}_0^k(X) \to \mathscr{F}_0^k(Y)$.
\end{lemma}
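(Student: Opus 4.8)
The plan is to iterate Lemma \ref{simplicial_one_down}, peeling off one dimension at a time. First dispose of the degenerate cases: if $k=1$ — which is the only possibility when $d=3$ — then $\mathscr{F}_0^k(X)=\mathscr{F}_{k-1}^k(X)$ and there is nothing to prove, so assume $d\geq 4$ and $2\leq k\leq d-2$. Starting from the given isomorphism $\varphi:\mathscr{F}_{k-1}^k(X)\to\mathscr{F}_{k-1}^k(Y)$, I would apply Lemma \ref{simplicial_one_down} with parameter $k$ to extend $\varphi$ to an isomorphism $\mathscr{F}_{k-2}^k(X)\to\mathscr{F}_{k-2}^k(Y)$, then restrict this to faces of dimensions $k-2$ and $k-1$ to obtain an isomorphism $\mathscr{F}_{k-2}^{k-1}(X)\to\mathscr{F}_{k-2}^{k-1}(Y)$, and repeat with parameter $k-1$, then $k-2$, and so on down to parameter $2$. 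Each application is legitimate because the parameter stays in the range $[2,d-2]$, and each one assigns $\varphi$ on the faces one dimension lower without altering the values already fixed, so the successive extensions are mutually consistent. After the final application (parameter $2$, which extends $\varphi$ from $\mathscr{F}_1^2$ onto $\mathscr{F}_0^2$) one is left with a single bijection $\varphi:\mathscr{F}_0^k(X)\to\mathscr{F}_0^k(Y)$.

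To see that this $\varphi$ is dimension-preserving and order-preserving on consecutive levels, I would note that each skeleton $\mathscr{F}_a^b(X)$ of a pure complex has its $a$-faces as minimal elements, and the dimension of any face $G$ is recovered order-theoretically as $a$ plus the length of a longest chain from a minimal element up to $G$; hence every poset isomorphism between such skeleta preserves dimension, and the restriction of a poset isomorphism to any band of consecutive dimensions is again a poset isomorphism. Applying this to the chain of extensions above, $\varphi$ preserves dimension globally, and for every $0\leq j\leq k-1$ its restriction to $\mathscr{F}_j^{j+1}(X)$ is a poset isomorphism onto $\mathscr{F}_j^{j+1}(Y)$; in particular $\varphi$ and $\varphi^{-1}$ both preserve containments between faces whose dimensions differ by one.

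It then remains to upgrade this to preservation of \emph{all} containments in $\mathscr{F}_0^k$. Here the simplicial hypothesis does the work: if $C\subseteq A$ are faces of $X$ with $\dim C<\dim A\leq k$, then since $A$ is a simplex there is a saturated chain $C=F_0\subset F_1\subset\cdots\subset F_m=A$ with $\dim F_{i+1}=\dim F_i+1$, and applying $\varphi$ step by step gives $\varphi(C)\subseteq\varphi(A)$; the reverse implication follows identically by running a saturated chain in $Y$ through $\varphi^{-1}$. Thus $\varphi$ is the desired isomorphism $\mathscr{F}_0^k(X)\to\mathscr{F}_0^k(Y)$.

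I do not expect a genuine obstacle here: Lemma \ref{simplicial_one_down} is the substantive input, and what remains is bookkeeping — tracking which parameter applies at which stage, verifying the constraint $2\leq(\text{parameter})\leq d-2$ throughout the descent, and confirming that the successive extensions agree on their overlaps. The one point that warrants a sentence of justification is the restriction-to-a-sub-band step, which relies on $\mathscr{F}_a^b(X)$ being graded with $a$-faces as minimal elements, valid because a pseudomanifold is pure.
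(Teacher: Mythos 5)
Your proposal is correct and matches the approach the paper intends: the paper gives no separate proof of Lemma~\ref{simplicial_reconstruct_down}, stating only that it follows ``by applying Lemma~\ref{simplicial_one_down} repeatedly,'' and your argument supplies exactly the omitted bookkeeping. In particular, the observations that the $k=1$ (hence $d=3$) case is vacuous, that each descent step keeps the parameter in $[2,d-2]$, and that containments between faces whose dimensions differ by more than one are recovered via saturated chains inside a simplex are precisely what is needed to make the iteration rigorous.
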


Our theorem on homology manifolds now follows from Theorem \ref{dancis} and Lemma \ref{simplicial_reconstruct_down}.

\begin{theorem}
\label{half_dimension}
    Let $X,Y$ be simplicial homology $(d-1)$-manifolds with $d \geq 4$, and let $\lceil\frac{d}{2}\rceil \leq k \leq d-2$. Then any isomorphism $\varphi:\mathscr{F}_{k-1}^{k}(X) \to \mathscr{F}_{k-1}^{k}(Y)$ extends to an isomorphism $\mathscr{F}(X) \to \mathscr{F}(Y)$. If $d$ is odd and $H_{(d-1)/2}(X,\mathds{Z}_2)=H_{(d-1)/2}(Y,\mathds{Z}_2)=0$, then the same holds for $d \geq 3$ and $\frac{d-1}{2} \leq k \leq d-2$.
\end{theorem}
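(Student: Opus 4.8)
The proof I would give is a short deduction from Theorem \ref{dancis} and Lemma \ref{simplicial_reconstruct_down}, so the plan is mostly bookkeeping. The first step is to observe that a homology $(d-1)$-manifold qualifies as a normal, simplicial $(d-1)$-pseudomanifold, so that Lemma \ref{simplicial_reconstruct_down} applies: $X$ is connected and simplicial by definition, and for every $j$-face $G$ with $0 \leq j \leq d-3$ the link $\operatorname{Lk}(G,X)$ has the homology of $S^{d-j-2}$ with $d-j-2 \geq 1$, hence $\tilde H_0(\operatorname{Lk}(G,X),\mathds{Z}_2)=0$ and $\operatorname{Lk}(G,X)$ is connected, which is precisely the normality condition.

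Given the isomorphism $\varphi\colon \mathscr{F}_{k-1}^{k}(X)\to\mathscr{F}_{k-1}^{k}(Y)$, note that $\lceil d/2\rceil\leq k\leq d-2$ forces $1\leq k\leq d-2$ (for $d\geq 4$ we have $\lceil d/2\rceil\geq 2$), so Lemma \ref{simplicial_reconstruct_down} extends $\varphi$ to an isomorphism $\psi\colon\mathscr{F}_0^{k}(X)\to\mathscr{F}_0^{k}(Y)$. A poset isomorphism between face posets of simplicial complexes preserves dimension (the dimension of a face is one less than the number of atoms weakly below it), so $\psi$ restricts to an isomorphism $\psi'\colon\mathscr{F}_0^{\lceil d/2\rceil}(X)\to\mathscr{F}_0^{\lceil d/2\rceil}(Y)$. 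Theorem \ref{dancis} then extends $\psi'$ to an isomorphism $\Psi\colon\mathscr{F}(X)\to\mathscr{F}(Y)$. To conclude that $\Psi$ extends the original $\varphi$ and not merely $\psi'$, I would invoke the fact that any isomorphism of simplicial face posets is determined by its action on vertices: a face $A$ is sent to the unique face whose vertex set is $\{\Psi(v):v\subseteq A\}$. Since $\Psi$ and $\psi$ agree on vertices (both restrict to $\psi'$ there), they agree on all of $\mathscr{F}_0^{k}(X)$, so $\Psi|_{\mathscr{F}_{k-1}^{k}(X)}=\psi|_{\mathscr{F}_{k-1}^{k}(X)}=\varphi$. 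The odd case is identical: when $H_{(d-1)/2}(X,\mathds{Z}_2)=H_{(d-1)/2}(Y,\mathds{Z}_2)=0$, $d\geq 3$, and $\tfrac{d-1}{2}\leq k\leq d-2$, Lemma \ref{simplicial_reconstruct_down} still applies (here $\tfrac{d-1}{2}\geq 1$, and when $d=3$ one has $k=1$), producing $\psi\colon\mathscr{F}_0^{k}(X)\to\mathscr{F}_0^{k}(Y)$, which restricts to $\mathscr{F}_0^{(d-1)/2}$ and feeds into the second half of Theorem \ref{dancis}.

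I do not expect a genuine obstacle in this argument, since the real content sits in Lemma \ref{simplicial_reconstruct_down} (hence in Lemma \ref{simplicial_one_down}) and in Dancis's theorem, both of which I am free to cite. The two points that warrant care are exactly the two reductions above: verifying that ``homology $(d-1)$-manifold'' entails ``normal, simplicial $(d-1)$-pseudomanifold'' so Lemma \ref{simplicial_reconstruct_down} is legitimately applicable, and checking that the extension furnished by Theorem \ref{dancis}, which a priori only extends the $\lceil d/2\rceil$-skeleton isomorphism, in fact agrees with $\varphi$ on the $(k-1)$- and $k$-faces — handled by the observation that a simplicial face-poset isomorphism is rigidly determined by its vertex map.
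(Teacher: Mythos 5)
Your argument is correct and follows essentially the same route as the paper: reduce to the vertex-through-$k$ skeleton via Lemma \ref{simplicial_reconstruct_down}, then invoke Dancis's theorem. The paper's proof is simply terser; the two points you flag as needing care (that homology manifolds are normal pseudomanifolds, and that the extension produced by Dancis agrees with the original $\varphi$ because simplicial-complex isomorphisms are determined by their vertex maps) are correct and are left implicit in the paper.
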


\begin{proof}
    By Lemma \ref{simplicial_reconstruct_down}, we may extend $\varphi$ to an isomorphism $\mathscr{F}_0^k(X) \to \mathscr{F}_0^k(Y)$. By Theorem \ref{dancis}, we may then extend $\varphi$ to an isomorphism $\mathscr{F}(X) \to \mathscr{F}(Y)$.
\end{proof}

The situation changes for pseudomanifolds, as Theorem \ref{dancis} no longer applies directly. Nevertheless, we can generalize Theorem \ref{half_dimension} to normal pseudomanifolds whose face links meet certain criteria.

\begin{theorem}
\label{reconstruct_pseudomanifold}
    Let $d \geq 5$ and $\lceil \frac{d+1}{2} \rceil \leq k \leq d-2$. Let $X$ and $Y$ be normal, simplicial $(d-1)$-pseudomanifolds in which the link of each $(2k-d-1)$-face is a homology $(2d-2k-1)$-manifold. Then any isomorphism $\varphi:\mathscr{F}_{k-1}^k(X) \to \mathscr{F}_{k-1}^k(Y)$ extends to an isomorphism $\mathscr{F}(X) \to \mathscr{F}(Y)$.
\end{theorem}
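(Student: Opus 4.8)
The plan is to bootstrap from Lemma~\ref{simplicial_reconstruct_down} and Theorem~\ref{dancis} (Dancis), applied to the links of the lowest-dimensional faces that stay ``large enough.'' Since $3 \le k \le d-2$, Lemma~\ref{simplicial_reconstruct_down} first lets us extend $\varphi$ to an isomorphism $\mathscr{F}_0^k(X) \to \mathscr{F}_0^k(Y)$, so from now on we treat the $k$-skeletons as identified. Set $j := 2k-d-1$; then $j \ge 0$ exactly because $k \ge \lceil \tfrac{d+1}{2}\rceil$, and $d-k \ge 2$. The key dimension count is: for a $j$-face $G$ of $X$, a face $A\setminus G$ of $\operatorname{Lk}(G,X)$ (with $G \subseteq A \in \mathscr{F}(X)$) has dimension $\le d-k$ if and only if $\dim A \le k$. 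Hence the $(d-k)$-skeleton of $\operatorname{Lk}(G,X)$ is determined by $\mathscr{F}_0^k(X)$, and $A\setminus G \mapsto \varphi(A)\setminus\varphi(G)$ is an isomorphism from the $(d-k)$-skeleton of $\operatorname{Lk}(G,X)$ onto that of $\operatorname{Lk}(\varphi(G),Y)$. By hypothesis both links are homology $(2d-2k-1)$-manifolds, i.e.\ homology $(D-1)$-manifolds with $D := 2(d-k)$ even and $\lceil D/2\rceil = d-k$; so Theorem~\ref{dancis} extends the above to an isomorphism $\Phi_G : \mathscr{F}(\operatorname{Lk}(G,X)) \to \mathscr{F}(\operatorname{Lk}(\varphi(G),Y))$, unique because a simplicial isomorphism is determined by its action on vertices, which lie in the $(d-k)$-skeleton.

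Now extend $\varphi$ to faces of $X$ of dimension $> k$. Any such face $\sigma$, of dimension $m$ say, contains a $j$-face $G$ (as $m \ge k+1 > j+1$), and we set $\varphi(\sigma) := \varphi(G) \cup \Phi_G(\sigma \setminus G)$, which is a face of $Y$. The heart of the proof is that this does not depend on the choice of $G$, since in fact
\[ \varphi(\sigma) \;=\; \bigcup\{\,\varphi(\tau) : \tau \subseteq \sigma,\ \dim\tau = k\,\} \]
as vertex sets of $Y$. To see this, note that $\dim(\sigma\setminus G) = m-j-1 \ge d-k$, so the simplex $\sigma\setminus G$ of $\operatorname{Lk}(G,X)$ is the union of its $(d-k)$-faces, which are precisely the sets $\tau\setminus G$ for $G \subseteq \tau \subseteq \sigma$ with $\dim\tau = k$. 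Applying $\Phi_G$ (which agrees with $A\setminus G \mapsto \varphi(A)\setminus\varphi(G)$ on the $(d-k)$-skeleton) and then taking the union with $\varphi(G)$ gives $\varphi(\sigma) = \bigcup_{G\subseteq\tau}\varphi(\tau)$. Finally, for every vertex $v \in \sigma$ we have $|G\cup\{v\}| \le j+2 \le k+1$ (here $k \le d$) while $|\sigma| \ge k+2$, so $G\cup\{v\}$ extends inside the simplex $\sigma$ to a $k$-face $\tau$ with $G \subseteq \tau$; hence $\bigcup_{G\subseteq\tau}\varphi(\tau) = \bigcup_{\tau\subseteq\sigma}\varphi(\tau)$, and the displayed identity follows.

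It remains to check that $\varphi : \mathscr{F}(X) \to \mathscr{F}(Y)$ is a poset isomorphism. It preserves dimension, because $\varphi(G)$ and $\Phi_G(\sigma\setminus G)$ are disjoint (the latter is a face of a link of $\varphi(G)$). It is order-preserving: the displayed formula handles $\sigma \subseteq \sigma'$ when $\dim\sigma' > k$, and when $\dim\sigma \le k$ one extends $\sigma$ to a $k$-face of $\sigma'$. It is injective: the $k$-faces of the simplex $\varphi(\sigma)$ are exactly the $\binom{m+1}{k+1}$ images $\varphi(\tau)$ of the $k$-faces $\tau$ of $\sigma$, so $\varphi(\sigma)$ determines the $k$-faces of $\sigma$, hence $\sigma$. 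Running the entire construction with $\varphi^{-1}$ in place of $\varphi$ (the hypotheses on $X$ and $Y$ are symmetric) yields an order-preserving map $\psi : \mathscr{F}(Y) \to \mathscr{F}(X)$, and the displayed formula gives $\psi\circ\varphi = \mathrm{id}$ and $\varphi\circ\psi = \mathrm{id}$; thus $\varphi$ is a bijection with order-preserving inverse, i.e.\ a poset isomorphism extending the original $\varphi$.

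I expect the independence-of-$G$ step to be the main obstacle: the displayed union formula comes out only because $j \ge 0$, $\dim(\sigma\setminus G) \ge d-k$, and $|G\cup\{v\}| \le k+1$ all hold at once, and these three inequalities together pin down exactly the range $\lceil\frac{d+1}{2}\rceil \le k \le d-2$ (with $d \ge 5$ forced for it to be nonempty). A secondary point to watch is the bookkeeping behind $\psi = \varphi^{-1}$, which relies on the uniqueness of the extensions supplied by Lemma~\ref{simplicial_reconstruct_down} and Theorem~\ref{dancis}.
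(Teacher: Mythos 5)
Your proposal is correct and follows the same route as the paper's proof: apply Lemma~\ref{simplicial_reconstruct_down} to recover the $k$-skeleton, apply Theorem~\ref{dancis} to the links of $(2k-d-1)$-faces to extend $\varphi$ link by link, and then glue the link isomorphisms to recover the faces of dimension greater than $k$. The only stylistic difference is that the paper dispatches the gluing/well-definedness step via a chain of equivalences characterizing which vertex sets span faces, whereas you phrase the same observation as the identity $\varphi(\sigma) = \bigcup_{\tau\subseteq\sigma,\ \dim\tau=k}\varphi(\tau)$ on vertex sets; both reduce to the fact that the extended link isomorphisms act on vertices by $\varphi$, so the outcome is independent of the chosen $(2k-d-1)$-face.
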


\begin{proof}
    By Lemma \ref{simplicial_reconstruct_down}, we can extend $\varphi$ to an isomorphism $\mathscr{F}_0^k(X) \to \mathscr{F}_0^k(Y)$. It remains to prove that we can extend the isomorphism to higher-dimensional faces.

    Let $G$ be a $(2k-d-1)$-face of $X$. Then $\varphi|_{\mathscr{F}_0^{d-k}(\operatorname{Lk}(G,X))}:\mathscr{F}_0^{d-k}(\operatorname{Lk}(G,X)) \to \mathscr{F}_0^{d-k}(\operatorname{Lk}(\varphi(G),Y))$ is an isomorphism. By assumption, $\operatorname{Lk}(G,X)$ and $\operatorname{Lk}(\varphi(G),Y)$ are homology $(2d-2k-1)$-manifolds. Thus, by Theorem \ref{dancis}, $\varphi|_{\mathscr{F}_0^{d-k}(\operatorname{Lk}(G,X))}$ extends to an isomorphism $\mathscr{F}(\operatorname{Lk}(G,X)) \to \mathscr{F}(\operatorname{Lk}(\varphi(G),Y))$.

    Let $k < r < d$, and let $v_1, \ldots, v_{r+1}$ be vertices in $X$. Then the following are equivalent:
    \begin{itemize}
        \item $\{v_1, \ldots, v_{r+1}\}$ is the vertex set of an $r$-face in $X$.
        \item $\{v_1, \ldots, v_{2k-d}\}$ is the vertex set of a face $G \in \mathscr{F}^{2k-d-1}(X)$, and $\{v_{2k-d+1},\ldots,v_{r+1}\}$ is the vertex set of a $(d-2k+r)$-simplex in $\operatorname{Lk}(G,X)$.
        \item $\varphi\{v_1, \ldots, v_{2k-d}\}$ is the vertex set of a face $\varphi(G) \in \mathscr{F}^{2k-d-1}(Y)$, and $\varphi\{v_{2k-d+1},\ldots,v_{r+1}\}$ is the vertex set of a $(d-2k+r)$-simplex in $\operatorname{Lk}(\varphi(G),Y)$.
        \item $\varphi\{v_1, \ldots, v_{r+1}\}$ is the vertex set of an $r$-face in $Y$.
    \end{itemize}
    
    For each $r$-face $G$ in $X$ with $k < r < d$, we may now define $\varphi(G):=G'$, where $G'$ is the unique $r$-face of $Y$ such that $\varphi(\mathscr{F}^0(G)) = \mathscr{F}^0(G')$. This gives our desired isomorphism $\varphi:\mathscr{F}(X) \to \mathscr{F}(Y)$.
\end{proof}

\begin{remark}
    Theorem \ref{reconstruct_pseudomanifold} holds for $d \geq 4$ and $\lceil \frac{d}{2} \rceil \leq k \leq d-2$ if we instead require the link of each $(2k-d)$-face to be a homology $(2d-2k-2)$-manifold having trivial $(d-k-1)$-homology with $\mathds{Z}_2$-coefficients.
\end{remark}

Let us illustrate why the conditions of Theorem \ref{reconstruct_pseudomanifold} are important. For arbitrary $d \geq 3$, we will construct a pair of normal, simplicial $(d-1)$-pseudomanifolds $X$ and $Y$ which are not isomorphic, but whose $(d-2)$-skeleta \emph{are} isomorphic.

For each $d \geq 2$, let
\[
    V_{d-1} := \{1,\ldots,d\} \times \{0,1,2\}.
\]
We define $M^{d-1}$ as the pure $(d-1)$-complex on $V_{d-1}$ whose facets are exactly the sets $\{(1,a_1),\ldots,(d,a_d)\} \subset V_{d-1}$ with
\[
    \sum_{i=1}^d a_i \not\equiv 0\pmod 3.
\]
For $0 \leq k \leq d-2$, the $k$-faces of $M^{d-1}$ are the sets $\{(m_1,a_1),\ldots,(m_{k+1},a_{k+1})\} \subset V$ with $m_1 < \ldots < m_{k+1}$. To see why, observe that regardless of $a_1, \ldots, a_{k+1}$, we can always choose $a_{k+2}, \ldots, a_d \in \{0,1,2\}$ such that $\sum_{i=1}^d a_i \not\equiv 0\pmod 3$.

Each $(d-2)$-face
\[
    G = \{(1,a_1),\ldots,(n-1,a_{n-1}),(n+1,a_{n+1}),\ldots,(d,a_d)\}
\]
of $M^{d-1}$ is contained in exactly two facets. These facets are $G \cup \{(n,a_n)\}$ and $G \cup \{(n,a_n')\}$, where
\begin{align*}
    a_n &\equiv 1 - \sum_{i=1}^{n-1}a_i - \sum_{i=n+1}^d a_i \pmod 3,\\
    a_n' &\equiv 2 - \sum_{i=1}^{n-1}a_i - \sum_{i=n+1}^d a_i \pmod 3.
\end{align*}
Thus, $M^{d-1}$ is a pseudomanifold for each $d \geq 2$.

Observe that $M^1$ is a six-vertex circle and therefore a 1-manifold. Additionally, for each $d \geq 3$ and $G \in \mathscr{F}^k(M^{d-1})$ with $0 \leq k \leq d-3$, we have $\operatorname{Lk}(G,M^{d-1}) \cong M^{d-k-2}$. By strong induction on $d$, we can see that $M^{d-1}$ is a normal $(d-1)$-pseudomanifold for all $d \geq 2$.

Furthermore, $M^{d-1}$ is orientable. For each facet $\{(1,a_1),\ldots,(d,a_d)\}$, we may set $[(1,a_1),\ldots,(d,a_d)]$ as the positive orientation if $\sum_{i=1}^d a_i \equiv 1 \pmod 3$; negative if $\sum_{i=1}^d a_i \equiv 2 \pmod 3$.

Now, fix $d \geq 3$. Beginning with $M^{d-1}$, we construct a normal $(d-1)$-pseudomanifold $X$ by a stellar subdivision of the facets $\{(1,1),(2,0),(3,0),\ldots,(d,0)\}$ and $\{(1,0),(2,1),(3,0),\ldots,(d,0)\}$. We call the new vertices $v_1$ and $v_2$, respectively.

Beginning again with $M^{d-1}$, we similarly construct a normal $(d-1)$-pseudomanifold $Y$ by a stellar subdivision of the facets $\{(1,1),(2,0),(3,0),\ldots,(d,0)\}$ and $\{(1,0),(2,2),(3,0),\ldots,(d,0)\}$. We call the new vertices $w_1$ and $w_2$, respectively. Figure \ref{designs} shows the structures of $X$ and $Y$ for $d=3$.

\begin{figure}
    \centering
    \includegraphics[width=\textwidth]{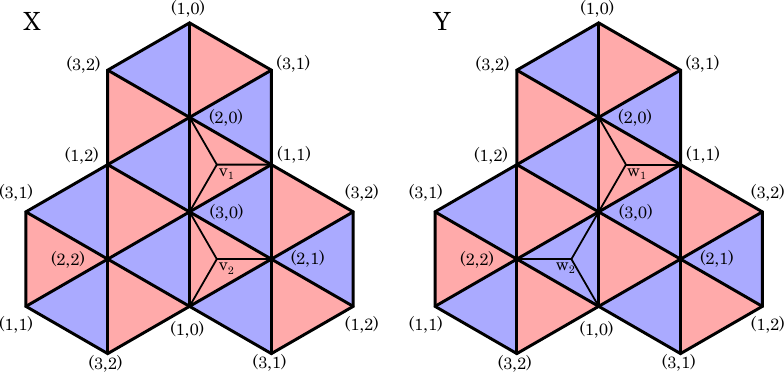}
    \caption{Normal $(d-1)$-pseudomanifolds $X$ and $Y$ for $d=3$. In this case, $X$ and $Y$ are manifolds homeomorphic to the 2-torus.}
    \label{designs}
\end{figure}

\begin{claim}
    $\mathscr{F}(X)$ is not isomorphic to $\mathscr{F}(Y)$.
\end{claim}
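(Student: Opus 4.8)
The plan is to exhibit a property of the face poset that $X$ satisfies but $Y$ does not; the relevant feature is whether the neighborhoods of the two stellarly introduced vertices jointly ``span'' a facet.

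First I would show that the two new vertices are recognizable from the face poset alone. Call a vertex $u$ an \emph{apex} if $\operatorname{Lk}(u)$ is isomorphic to the boundary complex of a $(d-1)$-simplex; this is clearly poset-invariant. I claim the apexes of $X$ are exactly $v_1,v_2$ and the apexes of $Y$ are exactly $w_1,w_2$. Indeed, $\operatorname{Lk}(v_i,X)$ is the boundary of the subdivided $(d-1)$-simplex, which has exactly $d$ vertices, while every vertex of $M^{d-1}$ has link $\cong M^{d-2}$, which has $3(d-1)>d$ vertices for $d\geq 3$; since a stellar subdivision of a facet never deletes a vertex from any vertex link, every vertex of $X$ other than $v_1,v_2$ still has more than $d$ vertices in its link and hence is not an apex (all links in question have dimension $d-2$, so the vertex count alone settles it). The case requiring the most care is $d=3$, where ``boundary of a $2$-simplex'' is just a triangle and one must check that the relevant hexagonal links cannot collapse.

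Next I would record the distinguishing invariant: say a complex $Z$ has property $(\star)$ if there are two distinct apexes $u_1,u_2$ of $Z$ and a facet $F$ of $Z$ such that every vertex of $F$ is joined by an edge to $u_1$ or to $u_2$; this is a poset-isomorphism invariant. To see $X$ has $(\star)$, note that the neighbors of $v_i$ in $X$ are exactly the vertices of the subdivided facet, so $v_1$ and $v_2$ are jointly adjacent to all $d+2$ vertices of $F_1\cup F_2$, where $F_1=\{(1,1),(2,0),(3,0),\dots,(d,0)\}$ and $F_2=\{(1,0),(2,1),(3,0),\dots,(d,0)\}$; and the transversal $G=\{(1,1),(2,1),(3,0),\dots,(d,0)\}$ has coordinate sum $2\not\equiv 0\pmod 3$, so it is a facet of $M^{d-1}$, it differs from both $F_1$ and $F_2$ (hence survives in $X$), and all of its vertices lie in $F_1\cup F_2$.

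Finally I would show $Y$ fails $(\star)$. By the first step, the only candidate apex pair in $Y$ is $\{w_1,w_2\}$, and the vertices jointly adjacent to $w_1,w_2$ are exactly those of $F_1\cup F_3$ with $F_3=\{(1,0),(2,2),(3,0),\dots,(d,0)\}$. A facet of $Y$ whose vertex set lies in $F_1\cup F_3$ cannot be one of the stellar pieces (those contain $w_1$ or $w_2$, which are not in $F_1\cup F_3$), so it would have to be a surviving facet of $M^{d-1}$, i.e.\ a transversal $\{(1,a_1),\dots,(d,a_d)\}$ with $a_1\in\{0,1\}$, $a_2\in\{0,2\}$, and $a_i=0$ for $i\geq 3$; the four choices of $(a_1,a_2)$ give coordinate sums $0,2,1,0$, and the two admissible ones reproduce $F_3$ and $F_1$, which are precisely the facets subdivided away in $Y$. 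Hence no such facet exists, $Y$ lacks $(\star)$, and $\mathscr{F}(X)\not\cong\mathscr{F}(Y)$. The genuine obstacle is spotting this invariant in the first place---observing that among $F_1,F_2,F_3$ the pair $\{F_1,F_2\}$ leaves a third facet inside its union while $\{F_1,F_3\}$ does not; once that is found, everything reduces to a few residue computations mod $3$.
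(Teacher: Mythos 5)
Your argument is correct, and it differs from the paper's in its second half. Both proofs share the same opening move: the two stellarly introduced vertices are poset-recognizable because they are precisely the vertices lying in only $d$ facets (equivalently, in your language, the vertices whose link is the boundary of a $(d-1)$-simplex; these criteria agree for links in a normal pseudomanifold). After that, the paper proceeds \emph{locally}: it singles out the two edges $E_i=\operatorname{Lk}(v_i,X)\backslash\operatorname{Lk}(v_{3-i},X)$ (and the analogous $D_i$ in $Y$) and compares the subcomplexes $\operatorname{Lk}(E_1,X)\backslash\{v_1,v_2\}$ and $\operatorname{Lk}(E_2,X)\backslash\{v_1,v_2\}$, which coincide, against $\operatorname{Lk}(D_1,Y)\backslash\{w_1,w_2\}$ and $\operatorname{Lk}(D_2,Y)\backslash\{w_1,w_2\}$, which do not. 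Your invariant $(\star)$ is instead a \emph{global} statement about the union of the two apex neighborhoods: in $X$ the transversal $\{(1,1),(2,1),(3,0),\dots,(d,0)\}$ (sum $\equiv 2$) is a surviving facet inside $F_1\cup F_2$, whereas in $Y$ the only transversals inside $F_1\cup F_3$ with nonzero sum are $F_1$ and $F_3$ themselves, which are destroyed by the subdivision. Both routes rely on the same mod-$3$ bookkeeping; yours has the minor advantage that $(\star)$ is a one-sentence poset invariant whose failure in $Y$ requires only checking four residues, while the paper's comparison of punctured edge-links is a bit harder to state but localizes the obstruction more sharply. Either is a perfectly valid proof, and you have correctly verified all the requisite facet and adjacency computations.
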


\begin{proof}
    Let $E_1,E_2 \in \mathscr{F}^1(X)$ be the edges
    \begin{align*}
        E_1 &= \operatorname{Lk}(v_1,X)\backslash\operatorname{Lk}(v_2,X) = \{(1,1),(2,0)\},\\
        E_2 &= \operatorname{Lk}(v_2,X)\backslash\operatorname{Lk}(v_1,X) = \{(1,0),(2,1)\}.
    \end{align*}
    Similarly, let $D_1,D_2 \in \mathscr{F}^1(Y)$ be the edges
    \begin{align*}
        D_1 &= \operatorname{Lk}(w_1,Y)\backslash\operatorname{Lk}(w_2,Y) = \{(1,1),(2,0)\},\\
        D_2 &= \operatorname{Lk}(w_2,Y)\backslash\operatorname{Lk}(w_1,Y) = \{(1,0),(2,2)\}.
    \end{align*}
    (We write $S \backslash T$, as above, to denote the subcomplex of $S$ consisting of all faces disjoint from $T$.)
    
    By way of contradiction, suppose there is an isomorphism $\varphi:\mathscr{F}(X) \to \mathscr{F}(Y)$. Observe that $v_1,v_2$ are the only vertices of $X$ contained in just $d$ facets, while $w_1,w_2$ are the only such vertices of $Y$. Thus, $\varphi\{v_1,v_2\}=\{w_1,w_2\}$. It follows that
    \begin{align*}
        \varphi\{\operatorname{Lk}(v_1,X)\backslash\operatorname{Lk}(v_2,X), \operatorname{Lk}(v_2,X)\backslash\operatorname{Lk}(v_1,X)\} &= \{\operatorname{Lk}(w_1,Y)\backslash\operatorname{Lk}(w_2,Y), \operatorname{Lk}(w_2,Y)\backslash\operatorname{Lk}(w_1,Y)\}\\
        \Rightarrow \varphi\{E_1,E_2\} &= \{D_1,D_2\}.
    \end{align*}
    This is impossible, because
    \[
    \operatorname{Lk}(E_1,X)\backslash\{v_1,v_2\} = \operatorname{Lk}(E_2,X)\backslash\{v_1,v_2\},
    \]
    while
    \[
    \operatorname{Lk}(D_1,Y)\backslash\{w_1,w_2\} \neq \operatorname{Lk}(D_2,Y)\backslash\{w_1,w_2\}.
    \]
    Hence, by contradiction, $\mathscr{F}(X)$ is not isomorphic to $\mathscr{F}(Y)$.
\end{proof}

\begin{claim}
    $\mathscr{F}_0^{d-2}(X)$ is isomorphic to $\mathscr{F}_0^{d-2}(Y)$.
\end{claim}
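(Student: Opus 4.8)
The plan is to construct the isomorphism by hand, exploiting a symmetry of $\mathscr{F}_0^{d-2}(M^{d-1})$ that is invisible in $M^{d-1}$ itself. The description of the faces of $M^{d-1}$ recalled above shows that $\mathscr{F}_0^{d-2}(M^{d-1})$ is simply the poset of all subsets of $V_{d-1}$ of size at most $d-1$ whose elements have pairwise distinct first coordinates; the congruence $\sum_{i=1}^{d}a_i\not\equiv 0\pmod 3$ that singles out the facets plays no role below the top dimension. Consequently, any permutation of $V_{d-1}$ that permutes the ``columns'' $\{i\}\times\{0,1,2\}$ among themselves and arbitrarily permutes the values $\{0,1,2\}$ within each column induces an automorphism of $\mathscr{F}_0^{d-2}(M^{d-1})$. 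In particular, the transposition $\alpha$ of the vertices $(2,1)$ and $(2,2)$, fixing every other vertex, is such an automorphism. Writing $F_1:=\{(1,1),(2,0),(3,0),\dots,(d,0)\}$, $F_2:=\{(1,0),(2,1),(3,0),\dots,(d,0)\}$, and $F_3:=\{(1,0),(2,2),(3,0),\dots,(d,0)\}$ for the relevant facets — so that $X$ is obtained from $M^{d-1}$ by subdividing $F_1$ (new vertex $v_1$) and $F_2$ (new vertex $v_2$), and $Y$ by subdividing $F_1$ (new vertex $w_1$) and $F_3$ (new vertex $w_2$) — one checks at once that $\alpha(F_1)=F_1$ and $\alpha(F_2)=F_3$.

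Next I would record the effect of a stellar subdivision on the $(d-2)$-skeleton. Subdividing at a facet $F$ deletes $F$, adds a new vertex $v_F$, and adds every face $\{v_F\}\cup\sigma$ with $\sigma$ a proper face of $F$; since $\dim F=d-1$, passing to $(d-2)$-skeleta gives, for $K'$ the subdivision of a complex $K$ at $F$,
\[
 \mathscr{F}_0^{d-2}(K')=\mathscr{F}_0^{d-2}(K)\ \cup\ C(v_F,F),\qquad C(u,F):=\{\,\{u\}\cup\sigma\ :\ \sigma\subseteq F,\ 0\le|\sigma|\le d-2\,\},
\]
where $\sigma=\emptyset$ recovers the new vertex $\{u\}$. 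The two subdivisions producing $X$ do not interfere, as $F_2\not\subseteq F_1$ remains a facet after $F_1$ is subdivided; likewise for $Y$. Hence
\[
 \mathscr{F}_0^{d-2}(X)=\mathscr{F}_0^{d-2}(M^{d-1})\cup C(v_1,F_1)\cup C(v_2,F_2),\qquad \mathscr{F}_0^{d-2}(Y)=\mathscr{F}_0^{d-2}(M^{d-1})\cup C(w_1,F_1)\cup C(w_2,F_3),
\]
each $\sigma$ occurring here already lying in $\mathscr{F}_0^{d-2}(M^{d-1})$. All order relations in these posets are controlled by inclusion among the subsets $\sigma$: a new vertex $v_F$ is comparable only to the faces of its own cone, $\{u\}\cup\sigma\subseteq\{u\}\cup\sigma'$ iff $\sigma\subseteq\sigma'$, and $\rho\subseteq\{u\}\cup\sigma$ iff $\rho\subseteq\sigma$ for $\rho\in\mathscr{F}_0^{d-2}(M^{d-1})$.

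I would then define $\Phi:\mathscr{F}_0^{d-2}(X)\to\mathscr{F}_0^{d-2}(Y)$ by $\Phi:=\alpha$ on $\mathscr{F}_0^{d-2}(M^{d-1})$, $\Phi(\{v_1\}\cup\sigma):=\{w_1\}\cup\alpha(\sigma)$ for $\sigma\subseteq F_1$, and $\Phi(\{v_2\}\cup\sigma):=\{w_2\}\cup\alpha(\sigma)$ for $\sigma\subseteq F_2$. Since $\alpha(F_1)=F_1$, $\alpha(F_2)=F_3$, and $\alpha$ preserves cardinalities, these images lie in $C(w_1,F_1)$ and $C(w_2,F_3)$ respectively, so $\Phi$ is a well-defined bijection; and by the description of the order relations just given, $\Phi$ is an order-isomorphism because $\alpha$ is. This yields $\mathscr{F}_0^{d-2}(X)\cong\mathscr{F}_0^{d-2}(Y)$.

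The one point demanding care — and the reason this example is not vacuous — is that $\alpha$ cannot be realized by an automorphism of $M^{d-1}$ itself. Any automorphism of $M^{d-1}$ carrying the unordered pair $\{F_1,F_2\}$ to $\{F_1,F_3\}$ would have to fix $\tau:=F_1\cap F_2=F_1\cap F_3=\{(3,0),\dots,(d,0)\}$ setwise, hence induce an automorphism of $\operatorname{Lk}(\tau,M^{d-1})\cong M^1$, the $6$-cycle. But, viewing $F_1,F_2,F_3$ as edges of this hexagon, the pairs $\{F_1,F_2\}$ and $\{F_1,F_3\}$ consist of edges at different cyclic distances, so no symmetry of the hexagon interchanges them. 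Thus the obstruction exhibited by $X$ and $Y$ is carried precisely by a symmetry that appears only after the top-dimensional faces are discarded — exactly the phenomenon this example is built to illustrate; everything else in the argument is routine verification.
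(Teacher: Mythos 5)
Your map is the same as the paper's: the paper defines $\varphi$ to send $v_1,v_2,(2,1),(2,2)$ to $w_1,w_2,(2,2),(2,1)$ and fix all other vertices, which is exactly your $\Phi$ (your $\alpha$ together with $v_i\mapsto w_i$). The paper's verification is terser — it notes directly that $\varphi$ permutes $\mathscr{F}_0^{d-2}(M^{d-1})\cong\mathscr{F}_0^{d-2}(X\setminus\{v_1,v_2\})\cong\mathscr{F}_0^{d-2}(Y\setminus\{w_1,w_2\})$ and that $\varphi(\operatorname{Lk}(v_i,X))=\operatorname{Lk}(w_i,Y)$ — whereas you spell out the reason (the $(d{-}2)$-skeleton of $M^{d-1}$ forgets the congruence condition, hence admits the extra column symmetry $\alpha$) and the decomposition of the skeleton under stellar subdivision; this is the same argument with the implicit steps made explicit, and your closing remark about $\alpha$ not lifting to $M^{d-1}$ is a nice aside but is not needed for the claim.
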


\begin{proof}
    Let $\varphi:\mathscr{F}_0^{d-2}(X) \to \mathscr{F}_0^{d-2}(Y)$ be the map taking $v_1,v_2,(2,1),(2,2)$ to $w_1,w_2,(2,2),(2,1)$, respectively, and fixing all other vertices. Then $\varphi$ permutes the faces of $\mathscr{F}_0^{d-2}(M^{d-1}) \cong \mathscr{F}_0^{d-2}(X\backslash\{v_1,v_2\}) \cong \mathscr{F}_0^{d-2}(Y\backslash\{w_1,w_2\})$. We can also see that $\varphi(\operatorname{Lk}(v_1,X)) = \operatorname{Lk}(w_1,Y)$ and $\varphi(\operatorname{Lk}(v_2,X)) = \operatorname{Lk}(w_2,Y)$. Thus, $\varphi:\mathscr{F}_0^{d-2}(X) \to \mathscr{F}_0^{d-2}(Y)$ is an isomorphism.
\end{proof}

In conclusion,

\begin{theorem}
\label{design-theorem}
    For all $d \geq 3$, there exist normal, orientable $(d-1)$-pseudomanifolds $X$ and $Y$ such that
    \begin{align*}
        \mathscr{F}_0^{d-2}(X) &\cong \mathscr{F}_0^{d-2}(Y),\\
        \mathscr{F}(X) &\not\cong \mathscr{F}(Y).
    \end{align*}
\end{theorem}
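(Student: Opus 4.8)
The plan is to realize the two pseudomanifolds as different stellar subdivisions of one highly symmetric ``ambient'' complex, so that the combinatorial difference is quarantined at the top dimension while everything below it remains identical up to a harmless relabeling. I would work with the complex $M^{d-1}$ on $V_{d-1}=\{1,\dots,d\}\times\{0,1,2\}$ whose facets are the transversals $\{(1,a_1),\dots,(d,a_d)\}$ with $\sum_i a_i\not\equiv 0\pmod 3$. The first block of work is to establish the structural properties of $M^{d-1}$: every partial transversal of size at most $d-1$ is a face (the omitted coordinates can always be chosen to avoid the forbidden residue), so each $(d-2)$-face lies in exactly two facets and $M^{d-1}$ is a pseudomanifold; the link of a $k$-face is, after a relabeling of one coordinate, isomorphic to $M^{d-k-2}$, so starting from the base case $M^1$ (a hexagon, hence a $1$-manifold) an induction on $d$ shows $M^{d-1}$ is normal; and $M^{d-1}$ is orientable, via the explicit rule orienting a facet positively when $\sum_i a_i\equiv 1$ and negatively when $\sum_i a_i\equiv 2\pmod 3$, checking that the two facets across any ridge induce opposite orientations.

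Next I would fix $d\ge 3$ and build $X$ by stellarly subdividing the facets $\{(1,1),(2,0),(3,0),\dots,(d,0)\}$ and $\{(1,0),(2,1),(3,0),\dots,(d,0)\}$ of $M^{d-1}$ (new vertices $v_1,v_2$), and $Y$ by subdividing $\{(1,1),(2,0),(3,0),\dots,(d,0)\}$ and $\{(1,0),(2,2),(3,0),\dots,(d,0)\}$ (new vertices $w_1,w_2$); each subdivided pair shares the common $(d-2)$-face $\{(3,0),\dots,(d,0)\}$. Since a stellar subdivision preserves being a normal orientable pseudomanifold, $X$ and $Y$ qualify. For $\mathscr{F}_0^{d-2}(X)\cong\mathscr{F}_0^{d-2}(Y)$ I would exhibit the vertex relabeling that swaps the layers $(2,1)\leftrightarrow(2,2)$ and the new vertices $v_i\leftrightarrow w_i$ and fixes everything else: swapping two values of the second coordinate is a combinatorial automorphism of $\mathscr{F}_0^{d-2}(M^{d-1})$, because residues matter only for the top-dimensional facet condition, and this relabeling carries the two subdivided facets of $X$ and the links of $v_1,v_2$ to those of $Y$ and $w_1,w_2$.

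The substance of the argument — and the step I expect to be the main obstacle — is showing $\mathscr{F}(X)\not\cong\mathscr{F}(Y)$, i.e. that the discrepancy cannot be repaired at the top dimension. I would first observe that $v_1,v_2$ are the only vertices of $X$ lying in exactly $d$ facets (and $w_1,w_2$ the only such vertices of $Y$; every old vertex lies in $2\cdot 3^{d-2}>d$ facets), so any isomorphism $\varphi\colon\mathscr{F}(X)\to\mathscr{F}(Y)$ sends $\{v_1,v_2\}$ to $\{w_1,w_2\}$, hence sends the unordered pair $\{E_1,E_2\}$ with $E_i:=\operatorname{Lk}(v_i,X)\setminus\operatorname{Lk}(v_{3-i},X)$ to the pair $\{D_1,D_2\}$ defined analogously in $Y$. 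The crux is then a link computation: the edges $E_1=\{(1,1),(2,0)\}$ and $E_2=\{(1,0),(2,1)\}$ impose the \emph{same} condition $\sum_{i\ge 3}a_i\not\equiv 2\pmod 3$ on coordinates $3,\dots,d$, so the residual links $\operatorname{Lk}(E_i,X)\setminus\{v_1,v_2\}$, each obtained from $\operatorname{Lk}(E_i,M^{d-1})$ by deleting the single facet $\{(3,0),\dots,(d,0)\}$, coincide; whereas $D_1=\{(1,1),(2,0)\}$ and $D_2=\{(1,0),(2,2)\}$ impose the \emph{different} conditions $\sum_{i\ge 3}a_i\not\equiv 2$ and $\sum_{i\ge 3}a_i\not\equiv 1$, so the corresponding residual links in $Y$ are not isomorphic (they disagree on which sum-$1$ transversals are facets, and removing one sum-$0$ facet does not repair this). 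Since $\varphi$ would force $\operatorname{Lk}(D_1,Y)\setminus\{w_1,w_2\}\cong\operatorname{Lk}(D_2,Y)\setminus\{w_1,w_2\}$, we reach a contradiction. Assembling the three claims — normal orientable pseudomanifolds, isomorphic $(d-2)$-skeleta, non-isomorphic face posets — yields Theorem~\ref{design-theorem}.
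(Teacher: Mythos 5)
The construction, the choice of $M^{d-1}$, the stellar subdivisions, the orientability/normality arguments, the counting-facets argument for $\varphi\{v_1,v_2\}=\{w_1,w_2\}$, and the isomorphism of $(d-2)$-skeleta all match the paper's proof exactly. The gap is in the final step.

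You claim that the residual links $\operatorname{Lk}(D_1,Y)\setminus\{w_1,w_2\}$ and $\operatorname{Lk}(D_2,Y)\setminus\{w_1,w_2\}$ are \emph{not isomorphic}, arguing that they ``disagree on which sum-$1$ transversals are facets.'' But disagreeing as subcomplexes is not the same as being non-isomorphic, and in fact these two complexes \emph{are} isomorphic as abstract simplicial complexes: the vertex relabeling $(i,a)\mapsto(i,-a\bmod 3)$ on $\{3,\dots,d\}\times\{0,1,2\}$ negates $\sum_{i\ge 3}a_i$, hence carries the condition $\sum\not\equiv 2$ to $\sum\not\equiv 1$, and it fixes the removed facet $\{(3,0),\dots,(d,0)\}$. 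So the conclusion you want $\varphi$ to contradict --- that the two $Y$-residual links are isomorphic --- is in fact true, and no contradiction arises.

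The argument that actually works, and is the one the paper uses, replaces isomorphism with \emph{equality} of subcomplexes. Since $\varphi$ is a single bijection on all faces, it sends the (genuinely identical) subcomplexes $\operatorname{Lk}(E_1,X)\setminus\{v_1,v_2\}=\operatorname{Lk}(E_2,X)\setminus\{v_1,v_2\}$ to $\operatorname{Lk}(D_1,Y)\setminus\{w_1,w_2\}$ and $\operatorname{Lk}(D_2,Y)\setminus\{w_1,w_2\}$ in some order, forcing those two image subcomplexes to coincide. They do not: for example, $\{(3,1),(4,0),\dots,(d,0)\}$ is a facet of $\operatorname{Lk}(D_1,Y)\setminus\{w_1,w_2\}$ but not of $\operatorname{Lk}(D_2,Y)\setminus\{w_1,w_2\}$. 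Your computation that $E_1,E_2$ impose the same residue condition while $D_1,D_2$ impose different ones is exactly the right observation; you just need to draw the non-equality conclusion from it rather than the non-isomorphism conclusion.
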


\section{Acknowledgments}
The author would like to thank Jos\'e Samper for bringing these problems to his attention. The author would also like to thank Isabella Novik for her invaluable guidance in every step of this paper's creation. The author was  partially supported by a graduate fellowship from NSF grant DMS-2246399.

\bibliography{bibliography}
\bibliographystyle{plain}

\end{document}